\documentclass[11pt,a4paper]{amsart}

\usepackage{microtype}
\usepackage[margin=31mm]{geometry}
\usepackage{amsmath,amssymb,amsthm,mathtools}
\usepackage{enumitem}
\usepackage{array}
\usepackage{xcolor}
\usepackage{hyperref}
\newtheorem{theorem}{Theorem}[section]
\newtheorem{proposition}[theorem]{Proposition}
\newtheorem{lemma}[theorem]{Lemma}

\newtheorem*{theoremA}{Theorem}

\theoremstyle{definition}

\theoremstyle{remark}
\newtheorem{remark}[theorem]{Remark}
\numberwithin{equation}{section}
\newcommand{\PP}{\mathbb P}
\newcommand{\RR}{\mathbb R}
\newcommand{\EE}{\mathbb E}
\newcommand{\Prob}{\operatorname{Prob}}
\newcommand{\Law}{\operatorname{Law}}
\newcommand{\Cpl}{\operatorname{Cpl}}
\newcommand{\dd}{\,d}
\newcommand{\1}{\boldsymbol 1}

\def\NN{{\mathbb N}}

\def\GL{\operatorname{GL}}
\def\SL{\operatorname{SL}}

\newcommand{\norm}[1]{\left\Vert #1 \right\Vert}

\title[Log-H\"older continuity]{Log-H\"oder continuity at zero Lyapunov gap for finite-state Markov $GL(2)$-cocycles}
\author{El Hadji Yaya Tall}
\address{Centro de investigación en Matemática Pura y Aplicada (CIMPA), Universidad de Costa Rica. San José, Costa Rica.}
\email{el.tall@ucr.ac.cr}
\date{\today}

\begin{document}

\begin{abstract}
We prove that the extremal Lyapunov exponents of finite-state Markov $\mathrm{GL}(2,\mathbb R)$-cocycles are pointwise log-Hölder continuous, jointly in the cocycle matrices and the transition kernel, at every parameter $(A,P)$ satisfying $\lambda_+(A,P)=\lambda_-(A,P)$.
Perturbations are taken within a fixed transition graph. The main new ingredient is a Markov perpetuity estimate for the nonsplit triangular case, obtained through a martingale–coboundary decomposition. In the conformal case, the exponent $1/2$ in the logarithmic modulus can be replaced by $1$.
\end{abstract}
\maketitle
\tableofcontents

\section{Introduction and Main results}
\label{sec:intro}

Let $X=\{1,\ldots,N\}$, $M=X^{\NN}$ and denote by $\sigma:M\to M$ the left shift, $(\sigma x)_k=x_{k+1}$. For every \(k\in\NN\), let
$$
X_k:M\to X,
\qquad
X_k(x)=x_k,
$$
be the $k$-th coordinate projection. Thus, under a Markov measure,
$$(X_k)_{k\in\NN}$$ is the corresponding stationary Markov chain.

Given $i_0,\ldots,i_n\in X$, we write
$$
[0; i_0,\ldots,i_n]
=
\left\{
x\in M:
X_0(x)=i_0,\ldots,X_n(x)=i_n
\right\}
$$
for the associated cylinder set.

Let $P=(P_{ij})_{i,j\in X}$ be a primitive stochastic matrix. This
means that
$$
P_{ij}\geq 0,
\qquad
\sum_{j\in X}P_{ij}=1
\quad\text{for every }i\in X,
$$
and that there exists $m_0\geq 1$ such that
$$
(P^{m_0})_{ij}>0
\quad\text{for every }i,j\in X.
$$

Since $P$ is primitive, it admits a unique stationary probability
row vector
$$
p(P)=\bigl(p_1(P),\ldots,p_N(P)\bigr),
$$
characterized by
$$
p(P)P=p(P),
\qquad
p_i(P)>0,
\qquad
\sum_{i\in X}p_i(P)=1.
$$

The stationary Markov measure $\mu_P$ on $M$ is determined by
\begin{equation}\label{eq:stationary-markov-cylinder}
\mu_P([0; i_0,\ldots,i_n])
=
p_{i_0}(P)
P_{i_0i_1}\cdots P_{i_{n-1}i_n}.
\end{equation}
Equivalently, under $\mu_P$, the coordinate process
$(X_k)_{k\in\NN}$ is a stationary Markov chain with stationary
distribution $p(P)$ and transition matrix $P$:
$$
\mu_P(X_0=i)=p_i(P),
\qquad
\mu_P(X_{k+1}=j\mid X_k=i)=P_{ij}.
$$

For every $i\in X$, let
$$
M_i=[0;i]
=
\{x\in M:X_0(x)=i\}.
$$
Since $\mu_P(M_i)=p_i(P)>0$, we may define the conditional Markov
measure
$$
\mu_{P,i}
=
\mu_P(\,\cdot\mid X_0=i).
$$
Thus, if $i_0=i$, then
\begin{equation}\label{eq:conditional-path-measure}
\mu_{P,i}([0;i_0,\ldots,i_n])
=
P_{i_0i_1}\cdots P_{i_{n-1}i_n}.
\end{equation}
In other words, under $\mu_{P,i}$, the future coordinate process
$(X_k)_{k\geq 0}$ is the Markov chain with transition matrix $P$
started from the deterministic state $X_0=i$.

For every integrable measurable function $Z:M\to\RR$, we write
$$
\EE_{P,i}[Z]
=
\int_M Z\,\dd\mu_{P,i}.
$$
When the transition matrix $P$ is fixed and no ambiguity is possible,
we abbreviate this notation to
$$
\EE_i[Z]=\EE_{P,i}[Z].
$$
The expectation with respect to the stationary Markov measure is
denoted by
$$
\EE_P[Z]
=
\int_M Z\,\dd\mu_P.
$$
Conditioning on the value of the time-zero coordinate gives
\begin{equation}\label{eq:stationary-expectation-decomposition}
\EE_P[Z]
=
\sum_{i\in X}p_i(P)\EE_{P,i}[Z].
\end{equation}

Now let
$$
A:X\to\GL(2,\RR),
\qquad
i\longmapsto A_i,
$$
be a locally constant linear cocycle. Equivalently, define
$$
A(x)=A_{X_0(x)}=A_{x_0}.
$$
For every $n\geq 1$, its $n$-step product is
\begin{align}
A^{(n)}(x)
&=
A(\sigma^{n-1}x)\cdots A(\sigma x)A(x)
\nonumber\\
&=
A_{X_{n-1}(x)}\cdots A_{X_1(x)}A_{X_0(x)}
\nonumber\\
&=
A_{x_{n-1}}\cdots A_{x_1}A_{x_0}.
\label{eq:markov-cocycle-product}
\end{align}

Since the alphabet $X$ is finite and every $A_i$ is invertible,
the required integrability assumptions are automatic. The
Furstenberg--Kesten theorem~\cite{FK}, applied to the cocycle and its
inverse, gives two constants
$$
\lambda_+(A,P)\geq\lambda_-(A,P)
$$
such that, for $\mu_P$-almost every $x\in M$,
\begin{equation}\label{eq:extremal-lyapunov-exponents}
\lambda_+(A,P)
=
\lim_{n\to\infty}
\frac1n\log\norm{A^{(n)}(x)}
\end{equation}
and
\begin{align}
\lambda_-(A,P)
&=
-\lim_{n\to\infty}
\frac1n\log\norm{\bigl(A^{(n)}(x)\bigr)^{-1}}
\nonumber\\
&=
\lim_{n\to\infty}
\frac1n
\log
\norm{\bigl(A^{(n)}(x)\bigr)^{-1}}^{-1}.
\label{eq:lower-lyapunov-exponent}
\end{align}
Here $\norm{\cdot}$ denotes any fixed operator norm on
$\mathrm{M}_2(\RR)$. The numbers $\lambda_+(A,P)$ and
$\lambda_-(A,P)$ are called the upper and lower, or extremal,
Lyapunov exponents of the Markov cocycle $(A,P)$.

The determinant identity is
\begin{equation}\label{eq:det-identity}
 \lambda_+(A,P)+\lambda_-(A,P)
 =\sum_i p_i\log|\det A_i|.
\end{equation}

\subsection{Norms and total variation}

For a vector $v=(v_i)_{i=1}^N\in\RR^N$, we use
\begin{equation}\label{eq:l1-vector}
 \|v\|_1=\sum_{i=1}^N|v_i|.
\end{equation}
Thus, for probability vectors $p,q$, the quantity $\|p-q\|_1$ is
the sum of the absolute coordinate differences.  For stochastic
matrices we use the maximum row $\ell^1$-distance
\begin{equation}\label{eq:l1-matrix}
 \|P-Q\|_1=\max_i\sum_j|P_{ij}-Q_{ij}|.
\end{equation}
This convention is natural for coupling one transition step from a
fixed current state.

\subsection{Distance}

For two cocycles on the same transition graph, use the local distance
\begin{equation}\label{eq:distance}
 \Delta((A,P),(B,Q))=
 \max_i\bigl(\|A_i-B_i\|+\|A_i^{-1}-B_i^{-1}\|\bigr)
 +\|P-Q\|_1.
\end{equation}
All constants below are uniform in a fixed compact neighborhood on which
the transition graph does not change.

\begin{theoremA}[Main Theorem]
Suppose
$$
 \lambda_+(A,P)=\lambda_-(A,P)=:\lambda_0.
$$
Then there are $C>0$ and $\delta_0\in(0,1)$ such that
\begin{equation}\label{eq:main-logholder}
 |\lambda_\pm(B,Q)-\lambda_0|
 \le C\left(\log\frac1{\Delta((A,P),(B,Q))}\right)^{-1/2}
\end{equation}
whenever $0<\Delta((A,P),(B,Q))<\delta_0$.  In the compact normal
form the power $1/2$ may be replaced by $1$.
\end{theoremA}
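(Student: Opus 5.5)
We will prove the estimate separately for $\lambda_+$ and for $\lambda_-$. By the determinant identity \eqref{eq:det-identity}, both reduce to a one-sided bound. The right-hand side $\sum_i p_i(Q)\log|\det B_i|$ is Lipschitz in $(B,Q)$, since $p(Q)$ depends Lipschitz-continuously on $Q$ for primitive $Q$ on a fixed graph. Hence it suffices to show
$$
\lambda_+(B,Q)-\lambda_0\le C\Bigl(\log\tfrac1{\Delta}\Bigr)^{-1/2}.
$$
After subtracting $\tfrac12\log|\det|$ we may assume $\det A_i=\pm1$, so that $\lambda_0=0$. Upper semicontinuity of $\lambda_+$ comes from finite-time truncation: for every $n$,
$$
\lambda_+(B,Q)\le \tfrac1n\EE_Q\log\norm{B^{(n)}}.
$$
The finite-time quantity $\tfrac1n\EE_Q\log\norm{B^{(n)}}$ differs from $\tfrac1n\EE_P\log\norm{A^{(n)}}$ by at most $C^n\Delta$. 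Indeed, we couple the two chains one step at a time with failure probability $\le\|P-Q\|_1$ per step, and the products differ by at most $nK^{n}\Delta$ in norm, with the logs controlled by invertibility. The whole problem is therefore to obtain a \emph{quantitative} rate of convergence $\tfrac1n\EE_P\log\norm{A^{(n)}}\to\lambda_+(A,P)=0$ at the unperturbed cocycle. If we can show $\tfrac1n\EE_P\log\norm{A^{(n)}}\le Cn^{-1/2}$ (or $Cn^{-1}$), then choosing $n\asymp c\log(1/\Delta)$ balances $C^n\Delta$ against $n^{-1/2}$ and yields \eqref{eq:main-logholder} with exponent $1/2$ (respectively $1$).

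To get this rate we classify the zero-gap cocycle $(A,P)$ by a Furstenberg/Ledrappier-type trichotomy for Markov cocycles with $\lambda_+=\lambda_-$. In the first case there is a measurable, hence by the finite-state structure locally constant, family of inner products invariant along the transition graph. We call this the \emph{compact normal form}: after a conjugacy $A_i\mapsto D_jA_iD_i^{-1}$ along edges $i\to j$, all matrices lie in $O(2)$. In the second case there is an invariant line field, and conjugating yields upper triangular $A_i$ whose diagonal cocycles $a_i,d_i$ both have zero mean. The third case, a pair of invariant lines swapped or preserved, is handled like the triangular one. In the compact case $\log\norm{A^{(n)}}$ is uniformly bounded by the conjugacy constant, so $\tfrac1n\EE\log\norm{A^{(n)}}\le C/n$, which gives exponent $1$.

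The main obstacle is the nonsplit triangular case. There the product is
$$
A^{(n)}=\begin{pmatrix}e^{S_n^a}&e^{S_{n}^d}\sum_{k<n}e^{S_k^a-S_{k+1}^d}b_{x_k}\\0&e^{S_n^d}\end{pmatrix},
$$
with $S^a_n,S^d_n$ Birkhoff sums of mean zero. Since the diagonal entries have zero-mean logs, $\log\norm{A^{(n)}}$ is essentially
$$
\max\Bigl(|S^a_n|,|S^d_n|,\ \log\bigl|\textstyle\sum_k e^{S^a_k-S^d_{k+1}}b_k\bigr|\Bigr).
$$
For the off-diagonal term we need a Markov perpetuity estimate. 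We use the Poisson equation for $P$, solving $(I-P)g=f$ with $f=\log|a|-\log|d|$, to write $S^a_k-S^d_k=M_k+g(x_0)-g(x_k)$, where $M_k$ is a martingale with bounded increments (the martingale–coboundary decomposition). Then
$$
\EE\max_{k\le n}|M_k|\le C\sqrt n
$$
by Doob's inequality, and the perpetuity sum is bounded by $n\,e^{C+\max_k M_k}$. Its logarithm is therefore $\le C\sqrt n+\log n$ in expectation. The same bound holds for $|S^a_n|,|S^d_n|$. This gives $\tfrac1n\EE\log\norm{A^{(n)}}\le Cn^{-1/2}$, and the balancing step above completes the proof.
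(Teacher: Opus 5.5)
\textbf{Where the proposal has a gap: the normal form.} Your overall architecture is the paper's:
\begin{itemize}
\item finite-time truncation with an $e^{bn}\Delta$ perturbation cost;
\item a rate $n^{-1/2}$ (or $n^{-1}$) for $\frac1n\EE_P\log\|\widehat A^{(n)}\|$ at the unperturbed cocycle;
\item the choice $n\asymp\log(1/\Delta)$, and the determinant identity for $\lambda_-$;
\item the Poisson-equation martingale decomposition, used to control both the additive walk and the perpetuity through $\max_{k\le n}|M_k|$.
\end{itemize}
Your use of Doob's $L^2$ inequality, $\EE\max_{k\le n}|M_k|\le 2L\sqrt n$, is a valid and simpler substitute for the paper's exponential supermartingale, since only means enter the theorem.

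The gap is in the algebraic reduction. The inference ``measurable, hence by the finite-state structure locally constant'' is not valid. For a locally constant cocycle with $\lambda_+>\lambda_-$, the Oseledets contracting direction is a measurable invariant line field that in general depends on the whole future $x_0,x_1,\dots$. So local constancy has to be extracted from the hypothesis $\lambda_+=\lambda_-$. Everything downstream depends on it: finitely many edge matrices, bounded edge observables on a finite chain (so the Poisson solution and the martingale increments are bounded), and $O(1)$ endpoint corrections. A merely measurable conjugacy, as in Thieullen's theorem, gives none of these.

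The paper obtains local constancy in three steps:
\begin{itemize}
\item it induces at returns to a fixed state $i_*$, which gives i.i.d.\ blocks with $\lambda_+(\nu_*)=0$;
\item it applies Furstenberg's theorem to the closed group $H$ generated by the return law;
\item it transports back to edges: $T_{ij}=C_j^{-1}\widehat A_iC_i\in H$, with $C_i$ the product along a fixed path from $i_*$ to $i$.
\end{itemize}
Alternatively, you could use an invariance principle: at zero exponent the invariant family of measures on $\PP^1$ is measurable with respect to both past and future, and conditional independence of past and future given $x_0$ forces dependence on $x_0$ alone. This would have to be followed by the atom/barycenter argument. Either route has to be actually supplied.

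\textbf{Smaller repairs.}
\begin{itemize}
\item The swapped-pair case is not a special case of the triangular one, since no line is invariant. The paper follows the current line on the two-sheet lift $X\times\mathbb Z/2\mathbb Z$. It checks centering separately on each recurrent class, because the lift may split.
\item Your gauge $D_jA_iD_i^{-1}$ depends on the edge, so $a$, $b$, $d$ and $f=\log|a|-\log|d|$ are edge observables. The Poisson equation must therefore be solved for $g(i)=\sum_jP_{ij}\xi(i,j)$, as in Lemma~\ref{lem:poisson}, rather than as $(I-P)g=f$ with $f$ a vertex function.
\item Your off-diagonal formula has the factors in the order $T_0\cdots T_{n-1}$. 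With the cocycle's actual order, the exponent inside the sum is $-M_k+O(1)$. This is harmless, because you control $\max_k|M_k|$.
\end{itemize}
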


The proof follows the zero-gap scheme of Tall--Viana \cite{TV}; the new
points are that the structural reduction is made at Markov return times
and that additive walks are edge observables rather than independent
increments.
\subsection{Previous results}
One of the central questions in the theory of linear cocycles is the
\emph{regularity problem}: how do the Lyapunov exponents depend on the
cocycle and on the driving dynamics? This question has been studied
since Furstenberg's foundational work on random matrix
products~\cite{Furstenberg63}.

Furstenberg--Kifer~\cite{FK} and Hennion~\cite{Hennion84} established
early continuity results under irreducibility-type assumptions.
Bocker--Viana~\cite{BV17} proved continuity for compactly supported
random \(GL(2,\mathbb R)\)-cocycles, and Malheiro--Viana~\cite{MV} developed the corresponding theory for finite state Markov cocycles. Continuity was subsequently extended to cocycles with invariant holonomies by Backes--Brown--Butler~\cite{BBB18}, while Avila--Eskin--Viana~\cite{AEV} proved continuity for compactly supported random products in arbitrary dimension. The compact-support assumption is relevant, as continuity may fail without it~\cite{SV20}.

Quantitative versions of these continuity results have also been established.
Le~Page~\cite{LePage} proved Hölder continuity of the top exponent under strong irreducibility and contraction assumptions. In dimension two, Tall--Viana~\cite{TV} obtained pointwise Hölder continuity at every compactly supported distribution with simple spectrum, and a log-Hölder modulus without the simplicity assumption.
For finitely supported measures with distinct exponents, Duarte and
Klein~\cite{DK20} obtained a weak-Hölder continuity.
More recently, Araújo~\cite{Araujo26} extended the pointwise Hölder
conclusion of Tall--Viana to random products in arbitrary fiber
dimension, assuming that the top exponent is simple and that the
associated equator has dimension at most one. Here the equator is the
maximal invariant subspace on which the asymptotic growth is strictly
smaller than the top exponent. Liu--Viana~\cite{LV26} proved pointwise log-Hölder continuity at compactly supported semisimple \(GL(d,\mathbb R)\)-distributions with one-point Lyapunov spectrum. Their theorem applies in arbitrary dimension, but the semisimplicity assumption excludes the non split triangular case already present in dimension two. In the Markov setting, Cai, Durães, Klein and Melo~\cite{CDKM} proved joint local Hölder continuity of the top Lyapunov exponent with respect to the cocycle and the transition kernel at quasi-irreducible cocycles with simple top exponent.

The present work treats the full zero-gap $GL(2,\mathbb R)$ setting for finite state Markov cocycles.

\section{Outline of the proof}
\label{sec:proof-comments}
This section describes the logic of the arguments before the technical estimates are introduced. 

For two probability measures $\mu,\nu$ on the same measurable space,
we define their total variation \emph{distance} by
\begin{equation}\label{eq:tv-distance-definition}
 d_{\rm TV}(\mu,\nu)=\sup_E|\mu(E)-\nu(E)|.
\end{equation}
On a finite set this becomes
\begin{equation}\label{eq:tv-finite}
 d_{\rm TV}(\mu,\nu)
 =\frac12\sum_x|\mu(x)-\nu(x)|
 =\frac12\|\mu-\nu\|_1.
\end{equation}
These equivalent descriptions, as well as the coupling inequality used
below, are recalled in Roch's coupling chapter
\cite[Lemmas~4.1.9 and~4.1.11]{Roch}.
There is a second convention that is also common.  The total variation
\emph{norm} of the signed measure $\mu-\nu$ is
\begin{equation}\label{eq:variation-norm}
 \|\mu-\nu\|_{\rm var}
 :=\sup_{\|f\|_\infty\le1}
   \left|\int f\dd\mu-\int f\dd\nu\right|
 =2d_{\rm TV}(\mu,\nu).
\end{equation}
Throughout this article, $d_{\rm TV}$ means the probability distance
\eqref{eq:tv-distance-definition}.

\subsection{Couplings and path laws}
\label{subsec:coupling-conventions}

For probability measures $\alpha,\beta$ on a measurable space
$S$, write $\Cpl(\alpha,\beta)$ for the set of probability
measures $\pi$ on $S\times S$ whose coordinate marginals are
$\alpha$ and $\beta$.  If
$$
 U(u,v)=u,\qquad V(u,v)=v,
$$
then $U\sim\alpha$, $V\sim\beta$, and
\begin{equation}\label{eq:coupling-disagreement-event}
 \Prob_\pi(U\ne V)
 =\pi\{(u,v):u\ne v\}.
\end{equation}
The coupling inequality is
\begin{equation}\label{eq:coupling-inequality-intro}
 d_{\rm TV}(\alpha,\beta)
 \le \Prob_\pi(U\ne V).
\end{equation}
A coupling is \emph{maximal} when equality holds.  On a finite space,
one obtains such a coupling by putting mass
$\min\{\alpha(x),\beta(x)\}$ at each diagonal point $(x,x)$,
and then coupling the two residual measures.  These conventions and
constructions are those of
\cite[Definition~4.1.1 and Lemmas~4.1.11, 4.1.13]{Roch}.

For a transition matrix $R$ and initial law $a$, let
\begin{equation}\label{eq:path-law-notation}
 \Law_{a,R}^{[0,n]}
 :=\Law_{a,R}(X_0,\ldots,X_n).
\end{equation}
When $a=p(R)$ is the stationary vector, we abbreviate this to
$\Law_R^{[0,n]}$.  A coupling of path laws is therefore a probability
measure in
$\Cpl(\Law_{a,P}^{[0,n]},\Law_{b,Q}^{[0,n]})$; its probability
symbol always refers to that product probability space.

\subsection{Finite scale upper approximation}
For any cocycle define the finite scale upper approximation
$$
 F_n(A,P)=\frac1n\EE_P\log\|A^{(n)}(X)\|.
$$
Subadditivity gives
$$
 \lambda_+(A,P)=\inf_{n\ge1}F_n(A,P),
 \qquad
 \lambda_+(A,P)\le F_n(A,P).
$$
For a fixed time $n$, matrix products and Markov path probabilities
depend regularly on the parameters.  The cost of this finite time
stability grows exponentially with $n$: schematically,
\begin{equation}\label{eq:comments-finite-time}
 |F_n(B,Q)-F_n(A,P)|\le C\delta e^{bn},
 \qquad
 \delta=\Delta((A,P),(B,Q)).
\end{equation}
The proof of the theorem therefore has two parts:
\begin{enumerate}[label=\textup{(\arabic*)}]
\item obtain a quantitative approximation of the infinite time
      exponent by an object at time $n$;
\item choose $n$ as a function of $\delta$ so that this dynamical
      approximation error balances the perturbative error in
      \eqref{eq:comments-finite-time}.
\end{enumerate}
Since the second error contains $e^{bn}$, the natural choice is
always $n\asymp\log(1/\delta)$.  A polynomial error in $n$ then
becomes a logarithmic modulus in $\delta$.

\subsection{Main idea}

Assume $\lambda_+=\lambda_-=\lambda_0$.  First remove the common
scalar growth, so the normalized cocycle has both exponents equal to
zero.  Inducing at the returns to a fixed Markov state produces i.i.d.
return blocks.  Furstenberg's two-dimensional alternative then reduces
the normalized return cocycle to one of the familiar degenerate forms:
compact, diagonal, triangular, or an interchange of two directions.
The return construction is used only for the algebraic classification.

In diagonal coordinates, logarithmic growth is governed by a centered
additive functional of the Markov chain.  In triangular coordinates,
the diagonal ratio produces the same additive walk and the upper right
entry produces a Markov perpetuity.  These increments are not
independent at deterministic time.  The Poisson equation decomposes a
centered edge observable as
$$
 \text{edge observable}
 =\text{martingale difference}+\text{bounded coboundary}.
$$
The martingale maximal inequality gives diffusive size $O(\sqrt n)$
for the additive walk.  In the degenerate triangular case, the same
maximal estimate controls the largest partial sum entering the
perpetuity, and hence gives
$$
 0\le F_n(A,P)-\lambda_0\le Cn^{-1/2}.
$$
Combining this with \eqref{eq:comments-finite-time} and taking
$n\asymp\log(1/\delta)$ gives
$$
 |\lambda_\pm(B,Q)-\lambda_0|
 \le C\bigl(\log(1/\delta)\bigr)^{-1/2}.
$$
In the compact case the normalized products are uniformly bounded, so
the approximation error is $O(1/n)$, which yields the stronger
$(\log(1/\delta))^{-1}$ modulus.

\section{Markov preliminaries and finite time reduction}
\label{sec:prelim}

\begin{lemma}
\label{lem:path-laws}
In a neighborhood of a primitive $P$, the stationary vector
$p(Q)$ depends Lipschitz continuously on $Q$.  Moreover, the
stationary laws of paths $(X_0,\ldots,X_n)$ for $P$ and $Q$ may
be coupled so that
\begin{equation}\label{eq:path-tv}
 d_{\rm TV}\bigl(\Law_P^{[0,n]}(X_0,\ldots,X_n),
                   \Law_Q^{[0,n]}(X_0,\ldots,X_n)\bigr)
 \le C(n+1)\|P-Q\|_1.
\end{equation}
\end{lemma}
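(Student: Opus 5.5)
Two things must be shown: Lipschitz dependence of the stationary vector, and a path-level coupling with total variation linear in $n$.

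\emph{Stationary vector.} Since $P$ is primitive, $1$ is a simple eigenvalue of $P$. Hence the map $Q\mapsto p(Q)$ is the unique solution of the linear system
\[
p(I-Q+\mathbf 1\pi)=\pi,
\]
where $\pi$ is any fixed probability row vector and $\mathbf 1$ is the column of ones. Equivalently, replace one column of $I-Q$ by $\mathbf 1$ and solve $p\,M(Q)=e_N$.

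The matrix $M(P)$ is invertible, because $\ker(I-P)$ is spanned by $\mathbf 1$ and the left kernel is spanned by $p(P)$. By continuity, $M(Q)^{-1}$ exists and is uniformly bounded for $\|Q-P\|_1$ small. The resolvent identity
\[
M(Q)^{-1}-M(Q')^{-1}=M(Q)^{-1}\bigl(M(Q')-M(Q)\bigr)M(Q')^{-1}
\]
then gives
\[
\|p(Q)-p(Q')\|_1\le C\|Q-Q'\|_1 .
\]
All norms on $\RR^N$ are equivalent, so the row-$\ell^1$ convention \eqref{eq:l1-matrix} only affects the constant. In the neighborhood one may also note that $Q$ stays primitive because the graph is fixed, but invertibility of $M(Q)$ by continuity is all that is needed.

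\emph{Path coupling.} I build the coupling step by step, as a Markovian coupling.

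\begin{enumerate}
\item At time $0$, take a maximal coupling of $p(P)$ and $p(Q)$. The probability that $X_0\ne Y_0$ is then $d_{\rm TV}(p(P),p(Q))=\tfrac12\|p(P)-p(Q)\|_1\le C\|P-Q\|_1$.
\item Given $(X_k,Y_k)$: if $X_k=Y_k=i$, draw $(X_{k+1},Y_{k+1})$ from a maximal coupling of the rows $P_{i\cdot}$ and $Q_{i\cdot}$.
\item If $X_k\neq Y_k$, draw the next states independently from the respective rows.
\end{enumerate}

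The marginals are then the correct Markov path laws $\Law_P^{[0,n]}$ and $\Law_Q^{[0,n]}$. On the event $\{X_k=Y_k\}$, the conditional probability of separating at step $k+1$ is $\tfrac12\sum_j|P_{ij}-Q_{ij}|\le\tfrac12\|P-Q\|_1$.

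A union bound over the first disagreement time gives
\[
\Prob(X_{[0,n]}\ne Y_{[0,n]})\le C\|P-Q\|_1+\tfrac n2\|P-Q\|_1 .
\]
The coupling inequality \eqref{eq:coupling-inequality-intro} then yields \eqref{eq:path-tv} with constant $\max(C,\tfrac12)$.

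\emph{Main obstacle.} The only delicate point is the uniformity of the Lipschitz constant for $p(Q)$, that is, the uniform invertibility of $M(Q)$ near $P$. This follows from simplicity of the Perron eigenvalue for primitive $P$ together with a Neumann series argument. Everything else is elementary.
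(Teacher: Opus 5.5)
Your proof is correct and follows the paper's: the path coupling (maximal coupling of $p(P)$ and $p(Q)$, maximal coupling of the rows $P_{i\bullet},Q_{i\bullet}$ while the paths agree, a union bound over the first disagreement time, then the coupling inequality) is exactly the paper's argument, and your Markovian rule after separation makes explicit what the paper leaves as an ``arbitrary'' coupling. For the stationary vector, the paper inverts $v\mapsto v(I-P)$ on the hyperplane $H_0=\{v:\sum_i v_i=0\}$ and applies this inverse to $(q-p)(I-P)=q(Q-P)$, whereas you use the bordered matrix $I-Q+\mathbf 1\pi$ with the resolvent identity; this is the same idea (invert $I-P$ off the Perron direction), and your version has the small advantage of giving the Lipschitz bound between any two $Q,Q'$ near $P$ directly, not only relative to the base point $P$.
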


\begin{proof}
Write row vectors on the left and let
$$
 H_0=\left\{v\in\RR^N:\sum_iv_i=0\right\}.
$$
Since $P$ is irreducible, the eigenvalue $1$ is simple.  Therefore
the map
$$
 v\longmapsto v(I-P)
$$
is invertible from \(H_0\) onto itself.  If $q=q(Q)$, then
$q(I-Q)=0$, while $p(I-P)=0$.  Subtracting gives
\begin{equation}\label{eq:stationary-vector-linearization}
 (q-p)(I-P)=q(Q-P).
\end{equation}
Both sides belong to $H_0$.  Applying the inverse of $I-P$ on
$H_0$, and using $\|q\|_1=1$, yields
\begin{equation}\label{eq:stationary-vector-lipschitz-expanded}
 \|q-p\|_1\le C_P\|Q-P\|_1.
\end{equation}
The same inverse remains uniformly bounded in a sufficiently small
neighborhood of $P$, proving the first assertion.

Now define
$$
 E_k=\{X_0=Y_0,\ldots,X_k=Y_k\}.
$$
Thus, $E_k$ is the event that the two paths coincide through time
$k$.

We first maximally couple the initial stationary distributions
$$
 X_0\sim p(P),
 \qquad
 Y_0\sim p(Q).
$$
By the defining property of a maximal coupling,
\begin{equation}\label{eq:initial-maximal-coupling}
 \Prob(X_0\ne Y_0)
 =
 d_{\rm TV}\bigl(p(P),p(Q)\bigr)
 =
 \frac12\|p(P)-p(Q)\|_1.
\end{equation}

Now suppose that $E_k$ occurs and that the common state at time $k$ is $i$.  Conditionally on this event, the laws of the next states are
$$
 \Law_P(X_{k+1}\mid E_k,X_k=Y_k=i)=P_{i\bullet},
$$
and
$$
 \Law_Q(Y_{k+1}\mid E_k,X_k=Y_k=i)=Q_{i\bullet}.
$$
Choose a maximal coupling of these two probability vectors.  Then
\begin{align}
 &\Prob\bigl(
   X_{k+1}\ne Y_{k+1}
   \mid E_k,X_k=Y_k=i
 \bigr)
 \notag\\
 &\qquad
 =
 d_{\rm TV}(P_{i\bullet},Q_{i\bullet})
 =
 \frac12\sum_j|P_{ij}-Q_{ij}|.
 \label{eq:one-step-maximal-coupling}
\end{align}
Since
$$
 \|P-Q\|_1
 =
 \max_i\sum_j|P_{ij}-Q_{ij}|,
$$
equation \eqref{eq:one-step-maximal-coupling} gives, uniformly in
\(i\),
\begin{equation}\label{eq:one-step-disagreement-bound}
 \Prob\bigl(
   X_{k+1}\ne Y_{k+1}
   \mid E_k,X_k=Y_k=i
 \bigr)
 \le \frac12\|P-Q\|_1.
\end{equation}

We now estimate the probability that the two complete paths differ.
Define the first disagreement events
$$
 D_{-1}=\{X_0\ne Y_0\},
$$
and, for $0\le k\le n-1$,
$$
 D_k
 =
 E_k\cap\{X_{k+1}\ne Y_{k+1}\}.
$$
Thus, $D_k$ is the event that the paths agree through time $k$ and
separate at the transition from time $k$ to time $k+1$.  These
events are pairwise disjoint, and
\begin{equation}\label{eq:path-disagreement-decomposition}
 \bigl\{
   (X_0,\ldots,X_n)\ne(Y_0,\ldots,Y_n)
 \bigr\}
 =
 D_{-1}\cup\bigcup_{k=0}^{n-1}D_k.
\end{equation}

Using conditional probability and
\eqref{eq:one-step-disagreement-bound}, we obtain
\begin{align}
 \Prob(D_k)
 &=
 \sum_i
 \Prob(E_k,X_k=Y_k=i)
 \notag\\
 &\qquad\quad\times
 \Prob\bigl(
   X_{k+1}\ne Y_{k+1}
   \mid E_k,X_k=Y_k=i
 \bigr)
 \notag\\
 &\le
 \frac12\|P-Q\|_1
 \sum_i\Prob(E_k,X_k=Y_k=i)
 \notag\\
 &=
 \frac12\|P-Q\|_1\Prob(E_k)
 \notag\\
 &\le
 \frac12\|P-Q\|_1.
 \label{eq:first-disagreement-bound}
\end{align}

Therefore, by \eqref{eq:initial-maximal-coupling},
\eqref{eq:path-disagreement-decomposition}, and
\eqref{eq:first-disagreement-bound},
\begin{align}
 &\Prob\bigl(
   (X_0,\ldots,X_n)\ne(Y_0,\ldots,Y_n)
 \bigr)
 \notag\\
 &\qquad
 =
 \Prob(D_{-1})
 +\sum_{k=0}^{n-1}\Prob(D_k)
 \notag\\
 &\qquad
 \le
 \frac12\|p(P)-p(Q)\|_1
 +\frac n2\|P-Q\|_1.
 \label{eq:path-disagreement-expanded}
\end{align}
Using the Lipschitz estimate for the stationary distributions,
$$
 \|p(P)-p(Q)\|_1
 \le C_P\|P-Q\|_1,
$$
we conclude that
\begin{align}
 \Prob\bigl(
   (X_0,\ldots,X_n)\ne(Y_0,\ldots,Y_n)
 \bigr)
 &\le
 \frac{C_P+n}{2}\|P-Q\|_1
 \notag\\
 &\le
 C(n+1)\|P-Q\|_1.
 \label{eq:path-coupling-expanded}
\end{align}

After the two chains separate, they may be coupled in an arbitrary
way. 
%
Finally, since the joint law of
$$
 \bigl((X_0,\ldots,X_n),(Y_0,\ldots,Y_n)\bigr)
$$
is a coupling of the two path laws, the coupling inequality gives
$$
 d_{\rm TV}\bigl(
   \Law_P^{[0,n]}(X_0,\ldots,X_n),
   \Law_Q^{[0,n]}(Y_0,\ldots,Y_n)
 \bigr)
 \le
 C(n+1)\|P-Q\|_1.
$$

\end{proof}

\subsection{Finite time perturbation}
Let
\begin{equation}\label{eq:Fn}
 F_n(A,P)=\frac1n\int_M\log\|A^{(n)}(x)\|\,\dd\mu_P(x).
\end{equation}
Subadditivity gives $\lambda_+(A,P)=\inf_nF_n(A,P)$.

\begin{lemma}
\label{lem:finite-time-stability}
There exist constants $C_0,C,b>0$ such that, if
$$
 \delta=\Delta((A,P),(B,Q)),
$$
then
\begin{equation}\label{eq:finite-time-stability}
 |F_n(B,Q)-F_n(A,P)|\le C\delta e^{bn}
\end{equation}
whenever \(C_0\delta e^{bn}\le 1/2\).
\end{lemma}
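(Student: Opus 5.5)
The plan is to split the difference $F_n(B,Q)-F_n(A,P)$ into a matrix part and a measure part:
\begin{align*}
 F_n(B,Q)-F_n(A,P)
 &=\Bigl[\tfrac1n\EE_Q\log\|B^{(n)}\|-\tfrac1n\EE_Q\log\|A^{(n)}\|\Bigr]\\
 &\quad+\Bigl[\tfrac1n\EE_Q\log\|A^{(n)}\|-\tfrac1n\EE_P\log\|A^{(n)}\|\Bigr].
\end{align*}
Both integrands depend only on the word $(X_0,\ldots,X_{n-1})$. Throughout we work in a compact neighborhood on which $\|A_i\|,\|A_i^{-1}\|,\|B_i\|,\|B_i^{-1}\|\le K$. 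Set $b_1=\log K$, and note that every word satisfies $\bigl|\log\|A^{(n)}\|\bigr|\le n\log K$.

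\emph{Matrix part.} For a fixed word, use the telescoping identity
$$
B_{x_{n-1}}\cdots B_{x_0}-A_{x_{n-1}}\cdots A_{x_0}
=\sum_{k}B_{x_{n-1}}\cdots B_{x_{k+1}}(B_{x_k}-A_{x_k})A_{x_{k-1}}\cdots A_{x_0}.
$$
This gives $\|B^{(n)}-A^{(n)}\|\le nK^{n-1}\delta$. We also have $\|A^{(n)}\|\ge\|(A^{(n)})^{-1}\|^{-1}\ge K^{-n}$. Hence
$$
\bigl|\|B^{(n)}\|/\|A^{(n)}\|-1\bigr|\le nK^{2n}\delta\le C_0\delta e^{bn}
$$
for a suitable $b>2\log K$. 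Under the hypothesis $C_0\delta e^{bn}\le1/2$, the inequality $|\log(1+t)|\le2|t|$ for $|t|\le1/2$ then bounds the matrix part by $2C_0\delta e^{bn}/n$, uniformly in the word. The hypothesis is needed exactly here: it keeps the logarithm away from its singularity. The $\|A_i^{-1}-B_i^{-1}\|$ term in $\Delta$ is not needed for this step.

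\emph{Measure part.} The function $g(w)=\frac1n\log\|A^{(n)}(w)\|$ on words of length $n$ is bounded by $\log K$. Lemma~\ref{lem:path-laws} gives
$$
\bigl|\EE_Q g-\EE_P g\bigr|\le 2\|g\|_\infty\, d_{\rm TV}\bigl(\Law_P^{[0,n]},\Law_Q^{[0,n]}\bigr)\le 2\log K\cdot C(n+1)\|P-Q\|_1.
$$
Since $\|P-Q\|_1\le\delta$ and $(n+1)\le e^{n}$, this is at most $C\delta e^{bn}$ after enlarging $b$.

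Adding the two bounds gives \eqref{eq:finite-time-stability}. The only delicate step is the lower bound $\|A^{(n)}\|\ge K^{-n}$, which converts the additive matrix perturbation into a multiplicative one. This is also the source of the exponential factor. All constants depend only on $K$ and on the constant of Lemma~\ref{lem:path-laws}, so they are uniform on the fixed neighborhood.
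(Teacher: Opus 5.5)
Your proposal is correct and follows the paper's proof essentially step for step: the same telescoping bound $\|B^{(n)}-A^{(n)}\|\le nK^{n-1}\delta$, the same lower bound $\|A^{(n)}\|\ge K^{-n}$ to get a relative error, the same inequality $|\log(1+t)|\le 2|t|$ under the smallness hypothesis, and the same total-variation estimate from Lemma~\ref{lem:path-laws} for the change of measure. Your intermediate term $\frac1n\EE_Q\log\|A^{(n)}\|$ is exactly the paper's split into $\|g_{B,n}-g_{A,n}\|_\infty$ and $2\|g_{A,n}\|_\infty\, d_{\rm TV}(\mu_{Q,n},\mu_{P,n})$.
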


\begin{proof}
Choose $K\ge2$ such that, throughout the fixed neighborhood,
$$
 \max_i\{\|A_i\|,\|A_i^{-1}\|,
          \|B_i\|,\|B_i^{-1}\|\}\le K.
$$
For an admissible word
$\omega=(i_0,\ldots,i_{n-1})$, write
$$
 A_\omega^{(n)}=A_{i_{n-1}}\cdots A_{i_0},
 \qquad
 B_\omega^{(n)}=B_{i_{n-1}}\cdots B_{i_0}.
$$
The telescoping identity gives
$$
 B_\omega^{(n)}-A_\omega^{(n)}
 =
 \sum_{r=0}^{n-1}
 B_{i_{n-1}}\cdots B_{i_{r+1}}
 (B_{i_r}-A_{i_r})
 A_{i_{r-1}}\cdots A_{i_0},
$$
and hence
$$
 \|B_\omega^{(n)}-A_\omega^{(n)}\|
 \le nK^{n-1}\delta.
$$
Moreover,
$\|A_\omega^{(n)}\|\ge K^{-n}$, because
$\|(A_\omega^{(n)})^{-1}\|\le K^n$. Therefore,
$$
 \frac{\bigl|\|B_\omega^{(n)}\|
                   -\|A_\omega^{(n)}\|\bigr|}
      {\|A_\omega^{(n)}\|}
 \le nK^{2n-1}\delta
 \le C_0\delta e^{bn},
$$
after choosing $C_0,b>0$ sufficiently large. If
$C_0\delta e^{bn}\le1/2$, the inequality
$|\log(1+t)|\le2|t|$ yields, uniformly in $\omega$,
\begin{equation}\label{eq:word-log-difference-expanded}
 \left|
 \log\|B_\omega^{(n)}\|-\log\|A_\omega^{(n)}\|
 \right|
 \le C\delta e^{bn}.
\end{equation}

Let $\mu_{P,n}$ and $\mu_{Q,n}$ denote the stationary laws of
words of length $n$ associated with $P$ and $Q$, respectively.
They are supported on the same set $\Omega_n$, since $P$ and $Q$
have the same transition graph. Define
$$
 g_{A,n}(\omega)=\frac1n\log\|A_\omega^{(n)}\|,
 \qquad
 g_{B,n}(\omega)=\frac1n\log\|B_\omega^{(n)}\|.
$$
Then
$$
 F_n(A,P)=\int_{\Omega_n}g_{A,n}\,\dd\mu_{P,n},
 \qquad
 F_n(B,Q)=\int_{\Omega_n}g_{B,n}\,\dd\mu_{Q,n}.
$$
Consequently,
\begin{align*}
 |F_n(B,Q)-F_n(A,P)|
 &\le
 \|g_{B,n}-g_{A,n}\|_\infty
 +2\|g_{A,n}\|_\infty
   d_{\rm TV}(\mu_{Q,n},\mu_{P,n}).
\end{align*}
By \eqref{eq:word-log-difference-expanded},
$$
 \|g_{B,n}-g_{A,n}\|_\infty
 \le \frac{C}{n}\delta e^{bn},
$$
while the uniform matrix bounds imply
$\|g_{A,n}\|_\infty\le\log K$. Lemma~\ref{lem:path-laws} gives
$$
 d_{\rm TV}(\mu_{Q,n},\mu_{P,n})
 \le Cn\|P-Q\|_1\le Cn\delta.
$$
It follows that
$$
 |F_n(B,Q)-F_n(A,P)|
 \le \frac{C}{n}\delta e^{bn}+Cn\delta
 \le C\delta e^{bn},
$$
where the last inequality uses $n\le C e^{bn}$. This proves
\eqref{eq:finite-time-stability}.
\end{proof}


Now, let 
\begin{equation}\label{eq:normalization}
 c_i=\frac12\log|\det A_i|,
 \qquad \widehat A_i=e^{-c_i}A_i.
\end{equation}
Then $|\det\widehat A_i|=1$.  If
$\lambda_+(A,P)=\lambda_-(A,P)=\lambda_0$, the determinant identity
gives
\begin{equation}\label{eq:zero-gap-scalar-mean}
 \sum_i p_i c_i
 =\frac12\sum_i p_i\log|\det A_i|
 =\frac12(\lambda_+(A,P)+\lambda_-(A,P))
 =\lambda_0.
\end{equation}
Since the chain is started with law $p$ and $pP^k=p$, the law of
every coordinate $X_k$ is $p$.  Consequently,
\begin{align}
 F_n(A,P)
 &=\frac1n\EE_P\log\left\|
     \exp\left(\sum_{k=0}^{n-1}c_{X_k}\right)
     \widehat A^{(n)}(X)\right\|\notag\\
 &=\lambda_0+
   \frac1n\EE_P\log\|\widehat A^{(n)}(X)\|.
 \label{eq:zero-gap-target-identity}
\end{align}
On the other hand, subadditivity gives
\begin{equation}\label{eq:zero-gap-lower-all-scales}
 F_n(A,P)\ge\inf_{m\ge1}F_m(A,P)
 =\lambda_+(A,P)=\lambda_0
 \qquad\text{for every }n\ge1.
\end{equation}
Thus the lower bound in
$$
 0\le F_n(A,P)-\lambda_0\le Cn^{-\sigma}
$$
is automatic at every scale.  The entire dynamical problem is the
upper bound, and by \eqref{eq:zero-gap-target-identity} it is equivalent
to
\begin{equation}\label{eq:normalized-growth-target}
 \EE_P\log\|\widehat A^{(n)}(X)\|
 \le Cn^{1-\sigma}
 \qquad(n\ge1).
\end{equation}

\begin{proposition}
\label{prop:finite-time-principle}
Assume that for some $\sigma\in(0,1]$,
\begin{equation}\label{eq:finite-time-rate}
 0\le F_n(A,P)-\lambda_0\le Cn^{-\sigma}
 \qquad(n\ge1),
\end{equation}
where $\lambda_+(A,P)=\lambda_-(A,P)=\lambda_0$.  Then
\begin{equation}\label{eq:abstract-logholder}
 |\lambda_\pm(B,Q)-\lambda_0|
 \le C'\left(\log\frac1\delta\right)^{-\sigma}
\end{equation}
for all sufficiently small $\delta=\Delta((A,P),(B,Q))$.
\end{proposition}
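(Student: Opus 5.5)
The plan is to bound $\lambda_+(B,Q)$ from above and $\lambda_-(B,Q)$ from below, both at a finite scale $n$ chosen as a function of $\delta$. The determinant identity \eqref{eq:det-identity} then converts the upper estimate for $\lambda_+$ into the lower estimate for $\lambda_-$, and vice versa.

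For the upper bound on $\lambda_+(B,Q)$, use subadditivity: $\lambda_+(B,Q)\le F_n(B,Q)$ for every $n$. Lemma~\ref{lem:finite-time-stability} and the hypothesis \eqref{eq:finite-time-rate} then give
$$
 \lambda_+(B,Q)-\lambda_0\le F_n(A,P)-\lambda_0+C\delta e^{bn}\le Cn^{-\sigma}+C\delta e^{bn},
$$
valid as long as $C_0\delta e^{bn}\le1/2$. Choose $n=\lfloor \frac{1}{2b}\log\frac1\delta\rfloor$. Then $\delta e^{bn}\le\delta^{1/2}$, so the constraint holds for small $\delta$, and $\delta^{1/2}\le C(\log\frac1\delta)^{-\sigma}$. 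Since $n\ge c\log\frac1\delta$, this yields $\lambda_+(B,Q)\le\lambda_0+C'(\log\frac1\delta)^{-\sigma}$.

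For the lower bound on $\lambda_-(B,Q)$, use the determinant identity:
$$
 \lambda_-(B,Q)=\sum_i p_i(Q)\log|\det B_i|-\lambda_+(B,Q).
$$
The map $Q\mapsto p(Q)$ is Lipschitz by Lemma~\ref{lem:path-laws}, and $\log|\det B_i|$ is Lipschitz in $B_i$ on the compact neighborhood, since determinants stay bounded away from zero there. Hence
$$
 \sum_i p_i(Q)\log|\det B_i|=2\lambda_0+O(\delta),
$$
using $\sum_i p_i(P)\log|\det A_i|=2\lambda_0$. Combining with the upper bound on $\lambda_+$ gives $\lambda_-(B,Q)\ge\lambda_0-C'(\log\frac1\delta)^{-\sigma}-C\delta$.

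Finally, $\lambda_-\le\lambda_+$ and $\lambda_++\lambda_-=2\lambda_0+O(\delta)$ together close both remaining sides. From $\lambda_+(B,Q)\ge\frac12(\lambda_++\lambda_-)(B,Q)=\lambda_0-O(\delta)$ we get the lower bound on $\lambda_+$. From $\lambda_-(B,Q)\le\lambda_0+O(\delta)$ we get the upper bound on $\lambda_-$. Absorbing $O(\delta)$ into $(\log\frac1\delta)^{-\sigma}$ gives \eqref{eq:abstract-logholder}.

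The only delicate point is choosing $n$ so that the exponential finite-time error stays both admissible for Lemma~\ref{lem:finite-time-stability} and negligible; taking half the critical logarithmic scale, as above, handles both. Everything else is bookkeeping with uniform constants on the fixed neighborhood.
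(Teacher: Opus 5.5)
Your proposal is correct and is essentially the paper's argument. Subadditivity plus Lemma~\ref{lem:finite-time-stability} at the scale $n=\lfloor\frac1{2b}\log\frac1\delta\rfloor$ gives the upper bound for $\lambda_+$. The determinant identity, the Lipschitz dependence of $p(Q)$ and $\log|\det B_i|$, and $\lambda_-\le\lambda_+$ then give the other three one-sided bounds. The one small correction is that, because of the floor, $n\ge c\log\frac1\delta$ holds only with a smaller constant such as $c=\frac1{4b}$ once $\delta$ is small, exactly as the paper arranges.
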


\begin{proof}
Let $b$ and $C_0$ be the constants in
Lemma~\ref{lem:finite-time-stability} and fix
$$
 c=\frac1{2b},
 \qquad
 n=\left\lfloor c\log\frac1\delta\right\rfloor.
$$
Choose $\delta_0$ so small that $n\ge1$ and
$n\ge\tfrac c2\log(1/\delta)$ for $0<\delta<\delta_0$.  Since
$bn\le bc\log(1/\delta)=\tfrac12\log(1/\delta)$,
\begin{equation}\label{eq:finite-time-smallness-expanded}
 C_0\delta e^{bn}\le C_0\delta^{1/2}.
\end{equation}
Decreasing $\delta_0$ once more makes the last quantity at most
$1/2$, so Lemma~\ref{lem:finite-time-stability} applies.

Subadditivity and \eqref{eq:finite-time-rate} now give
\begin{align}
 \lambda_+(B,Q)-\lambda_0
 &\le F_n(B,Q)-F_n(A,P)+F_n(A,P)-\lambda_0\notag\\
 &\le C\delta e^{bn}+Cn^{-\sigma}\notag\\
 &\le C\delta^{1/2}
 +C\left(\log\frac1\delta\right)^{-\sigma}.
 \label{eq:finite-time-upper-expanded}
\end{align}
For every $a>0$ and $\sigma>0$,
$$
 \lim_{\delta\downarrow0}
 \delta^a\left(\log\frac1\delta\right)^\sigma=0.
$$
Thus the logarithmic term is larger than the positive power
$\delta^{1/2}$ for sufficiently small $\delta$, and
\eqref{eq:finite-time-upper-expanded} proves the desired upper bound for
$\lambda_+$.
For the lower bound, set
$$
 \mathcal D(B,Q)=\sum_i p_i(Q)\log|\det B_i|.
$$
The stationary-weight estimate in Lemma~\ref{lem:path-laws}, the local
Lipschitz continuity of $B\mapsto\log|\det B|$, and
\eqref{eq:det-identity} give
\begin{equation}\label{eq:determinant-lipschitz-expanded}
 |\mathcal D(B,Q)-2\lambda_0|\le C\delta.
\end{equation}
Since $\lambda_+(B,Q)\ge\lambda_-(B,Q)$,
$$
 \lambda_+(B,Q)
 \ge\frac12\mathcal D(B,Q)
 \ge\lambda_0-C\delta.
$$
Again $\delta$ is dominated by the logarithmic modulus.  Finally,
$$
 \lambda_-(B,Q)=\mathcal D(B,Q)-\lambda_+(B,Q),
$$
and combining \eqref{eq:determinant-lipschitz-expanded} with the two-sided
estimate for $\lambda_+$ gives \eqref{eq:abstract-logholder} for
$\lambda_-$.
\end{proof}

\section{Algebraic classification}
\subsection{First returns and  normal forms}
\label{sec:normal}

Use the determinant normalization fixed in
\eqref{eq:normalization}.  By \eqref{eq:zero-gap-scalar-mean}, both
exponents of \((\widehat A,P)\) vanish.

\begin{lemma}
\label{lem:regeneration}
Fix $i_*\in X$, start the chain at $i_*$, and let
$$
 \tau_0=0,
 \qquad \tau_{r+1}=\inf\{n>\tau_r:X_n=i_*\}.
$$
The excursion words, their lengths, and their matrix products
$$
 G_r=\widehat A_{X_{\tau_{r+1}-1}}\cdots
     \widehat A_{X_{\tau_r}}
$$
are independent and identically distributed.  The return time
$\tau_1$ has an exponential tail.
\end{lemma}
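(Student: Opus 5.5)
The plan is to apply the strong Markov property at the successive return times $\tau_r$, and to obtain the exponential tail from primitivity of $P$.

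\emph{Step 1: the return times are a.s.\ finite stopping times.} Each $\tau_r$ is a stopping time for the natural filtration $\mathcal F_n=\sigma(X_0,\ldots,X_n)$, since $\{\tau_{r+1}=n\}$ is determined by $X_0,\ldots,X_n$. Finiteness follows from the tail bound in Step 3.

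\emph{Step 2: i.i.d.\ structure.} By the strong Markov property at $\tau_r$, conditionally on $\mathcal F_{\tau_r}$ the shifted process $(X_{\tau_r+k})_{k\ge0}$ is the Markov chain with kernel $P$ started at $X_{\tau_r}=i_*$. Its law is therefore $\mu_{P,i_*}$, independent of $\mathcal F_{\tau_r}$. The $r$-th excursion word $(X_{\tau_r},\ldots,X_{\tau_{r+1}-1})$ is obtained from this shifted process by the same measurable map that produces $(X_0,\ldots,X_{\tau_1-1})$ from the original process. The length $\tau_{r+1}-\tau_r$ and the product $G_r$ are likewise fixed functions of the word.

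Hence each excursion has the law of the first excursion under $\mu_{P,i_*}$. It is also independent of $\mathcal F_{\tau_r}$, which contains all earlier excursions. Induction on $r$ then gives joint independence and identical distribution of the triples (word, length, $G_r$). Concretely, one can verify the cylinder identity
$$
\mu_{P,i_*}(\text{first $r$ excursions equal } w_0,\ldots,w_{r-1})=\prod_{s<r}\pi(w_s),
$$
where $\pi(w)$ is the product of the transition probabilities along the word $w$ followed by the return step to $i_*$. This is immediate from \eqref{eq:conditional-path-measure}, since the concatenated path factorizes edge by edge. Using the cylinder identity avoids invoking the strong Markov property abstractly.

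\emph{Step 3: exponential tail.} Primitivity gives $m_0$ with $(P^{m_0})_{j i_*}\ge\eta>0$ for all $j$. For any starting state, the chain therefore visits $i_*$ at time $m_0$ with probability at least $\eta$. Apply the Markov property at the times $m_0,2m_0,\ldots$: at each of these times the conditional chance of not being at $i_*$ is at most $1-\eta$, whatever the current state. This gives
$$
\mu_{P,i_*}(\tau_1>km_0)\le(1-\eta)^k,
$$
so $\mu_{P,i_*}(\tau_1>n)\le Ce^{-\theta n}$ with $\theta=-\log(1-\eta)/m_0$. We must require the visit at time $km_0$ to occur with $k\ge1$, which is consistent because $\tau_1>0$ by definition.

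Since $\eta$ is bounded below uniformly for $Q$ in a small neighbourhood on the same transition graph, the constants are uniform. In particular $\tau_1<\infty$ a.s., and all moments of $\tau_1$ and of $\log\|G_0\|\le\tau_1\log K$ are finite.

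The main obstacle is bookkeeping rather than substance: the words must be defined consistently so that the return state $i_*$ is included at the start of each word and the product $G_r$ matches the definition in the statement. The formal strong Markov step is best handled through the cylinder identity in Step 2.
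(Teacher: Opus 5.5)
Your proposal is correct and follows essentially the same route as the paper: the strong Markov property at the stopping times $\tau_r$ gives the i.i.d.\ structure of (word, length, $G_r$), and the uniform minorization $(P^{m_0})_{j i_*}\ge\eta$, iterated via the Markov property at multiples of $m_0$, gives $\mu_{P,i_*}(\tau_1>km_0)\le(1-\eta)^k$, exactly as in the paper. Your cylinder-identity check based on \eqref{eq:conditional-path-measure} is a sound elementary substitute for the abstract strong Markov step. You also correctly note that it needs $\tau_1<\infty$ almost surely, which Step 3 supplies, for the factors $\pi(w_s)$ to be probability laws.
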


\begin{proof}
Each $\tau_r$ is a stopping time for the filtration
$\mathcal F_n=\sigma(X_0,\ldots,X_n)$.  At time $\tau_r$ the chain
is always at the same deterministic state $i_*$.  The strong Markov
property says that, conditional on $\mathcal F_{\tau_r}$, the future
process $(X_{\tau_r+k})_{k\ge0}$ has the law of a fresh chain started
at $i_*$ and is independent of the past.  Therefore
$$
 \bigl(\tau_{r+1}-\tau_r,
       (X_{\tau_r},\ldots,X_{\tau_{r+1}}),G_r\bigr),
 \qquad r\ge0,
$$
are independent and identically distributed.

For the tail estimate, choose $m\ge1$ so that
$(P^m)_{ii_*}>0$ for every state $i$; primitivity allows the same
$m$ for all $i$.  Put
$$
 a=\min_i(P^m)_{ii_*}>0.
$$
Starting from any state, the probability of visiting $i_*$ at the end
of the next $m$-block is at least $a$.  On the event
$\{\tau_1>rm\}$, apply the Markov property at time $rm$:
\begin{align*}
 \Prob_{i_*}(\tau_1>(r+1)m)
 &\le(1-a)\Prob_{i_*}(\tau_1>rm).
\end{align*}
Induction gives
\begin{equation}\label{eq:return-tail-expanded}
 \Prob_{i_*}(\tau_1>rm)\le(1-a)^r.
\end{equation}
This is the same block minorization mechanism that appears in the
discussion of uniform geometric ergodicity following
\cite[Theorem~4.3.2]{Roch}.  Indeed, our choice of $a$ says precisely
that
$$
 P^m(i,\mathord\cdot)\ge a\,\delta_{i_*}(\mathord\cdot)
 \qquad\text{for every }i.
$$
Thus each $m$-step block has, uniformly over its initial state, a
success probability at least $a$ of reaching $i_*$, so that
$\tau_1/m$ is stochastically dominated by a geometric random variable
with parameter $a$.  Roch applies the analogous minorization to a
coalescence time and then uses the coupling inequality to obtain uniform
geometric convergence to stationarity.  Here the random time is instead
the first return time to $i_*$; the geometric-tail iteration is the
same.
Thus $\tau_1$ has exponential moments in a neighborhood of the
origin, and in particular finite mean.
\end{proof}

\begin{lemma}
\label{lem:return-exponent}
Let $\nu_*$ be the law of $G_0$.  Then
\begin{equation}\label{eq:return-exponent}
 \lambda_+(\nu_*)=\EE_{i_*}(\tau_1)\lambda_+(\widehat A,P)=0.
\end{equation}
\end{lemma}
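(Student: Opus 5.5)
The plan is to compare the growth of the products $H_r:=G_{r-1}\cdots G_0$ with the growth of the original normalized cocycle along the return times. By construction, for $\mu_{P,i_*}$-almost every $x$,
$$
 H_r(x)=\widehat A^{(\tau_r(x))}(x),
 \qquad r\ge1.
$$
By Lemma~\ref{lem:regeneration}, the $G_r$ are i.i.d. with law $\nu_*$. They are also integrable in the sense that $\log\|G_0\|$ and $\log\|G_0^{-1}\|$ are in $L^1$. Indeed, with $K=\max_i\max\{\|\widehat A_i\|,\|\widehat A_i^{-1}\|\}$ we have $|\log\|G_0^{\pm1}\||\le\tau_1\log K$, and $\tau_1$ has finite mean by Lemma~\ref{lem:regeneration}. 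The Furstenberg--Kesten theorem therefore applies to the i.i.d. product, and gives
$$
 \lambda_+(\nu_*)=\lim_{r\to\infty}\frac1r\log\|H_r\|
$$
almost surely.

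Next, the strong law of large numbers applied to the i.i.d. increments $\tau_{r+1}-\tau_r$ gives $\tau_r/r\to\EE_{i_*}(\tau_1)$ almost surely. The event on which $\frac1n\log\|\widehat A^{(n)}(x)\|\to\lambda_+(\widehat A,P)$ has full $\mu_P$-measure, so it has full $\mu_{P,i_*}$-measure as well, because $\mu_{P,i_*}\ll\mu_P$ (recall $p_{i_*}>0$). Along the subsequence $n=\tau_r(x)\to\infty$ we then get
$$
 \frac1r\log\|H_r\|
 =\frac{\tau_r}{r}\cdot\frac1{\tau_r}\log\|\widehat A^{(\tau_r)}\|
 \longrightarrow \EE_{i_*}(\tau_1)\,\lambda_+(\widehat A,P).
$$
This proves the first equality in \eqref{eq:return-exponent}.

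For the second equality, recall that $\lambda_+(\widehat A,P)=\lambda_+(A,P)-\sum_ip_ic_i=\lambda_0-\lambda_0=0$ by \eqref{eq:zero-gap-scalar-mean}. The first step uses the fact that multiplying the cocycle by the scalar $\exp(\sum_{k<n}c_{X_k})$ shifts the exponent by the Birkhoff average of $c$. That average converges almost surely to $\sum_ip_ic_i$ by the ergodic theorem for the primitive (hence ergodic) stationary chain.

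The main obstacle is justifying that the almost-sure statements transfer correctly. There are two points. First, $\mu_P$-typical behaviour must hold $\mu_{P,i_*}$-almost surely, which is handled by the absolute continuity noted above. Second, the $\mu_P$-almost sure limit of $\frac1n\log\|\widehat A^{(n)}\|$ must be evaluated along the random times $\tau_r$, which is legitimate because it is a pointwise limit along all $n$. No uniformity is needed beyond $\tau_r\to\infty$.
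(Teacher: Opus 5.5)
Your proof is correct and follows essentially the same route as the paper: the exact concatenation $G_{r-1}\cdots G_0=\widehat A^{(\tau_r)}(X)$, the strong law $\tau_r/r\to\EE_{i_*}(\tau_1)$, the Furstenberg--Kesten limit for both the i.i.d.\ return product and the original cocycle evaluated along $n=\tau_r$, and $\lambda_+(\widehat A,P)=0$ from the determinant normalization. You also make explicit two points the paper leaves implicit, and both are handled correctly: the transfer of the $\mu_P$-almost sure limit to $\mu_{P,i_*}$ via $\mu_{P,i_*}\ll\mu_P$, and the integrability bound $\bigl|\log\|G_0^{\pm1}\|\bigr|\le\tau_1\log K$.
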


\begin{proof}
The products concatenate exactly:
\begin{equation}\label{eq:return-products-concatenate}
 G_{r-1}\cdots G_0=\widehat A^{(\tau_r)}(X).
\end{equation}
By Lemma~\ref{lem:regeneration}, the increments
$\tau_{k+1}-\tau_k$ are i.i.d. and integrable.  Hence the strong law
gives
\begin{equation}\label{eq:return-time-slln-expanded}
 \frac{\tau_r}{r}\longrightarrow\EE_{i_*}\tau_1
 \qquad\text{almost surely}.
\end{equation}
On the other hand, the multiplicative ergodic theorem for the original
Markov cocycle and the i.i.d. product theorem for the return blocks give
\begin{align*}
 \lambda_+(\nu_*)
 &=\lim_{r\to\infty}\frac1r
   \log\|G_{r-1}\cdots G_0\|\\
 &=\lim_{r\to\infty}
   \frac{\tau_r}{r}
   \frac1{\tau_r}\log\|\widehat A^{(\tau_r)}(X)\|\\
 &=\EE_{i_*}(\tau_1)\lambda_+(\widehat A,P).
\end{align*}
The last factor is zero by determinant normalization.  The exponential
tail \eqref{eq:return-tail-expanded} and the uniform matrix bounds
guarantee the required logarithmic integrability of $G_0$.
\end{proof}

\begin{lemma}
\label{lem:return-group}
Let $H$ be the closed subgroup of
$\mathrm{SL}^{\pm}(2,\RR)$ generated by $\operatorname{supp}\nu_*$.
Then either $H$ is compact or $H$ preserves a nonempty finite subset of $\PP^1$.
\end{lemma}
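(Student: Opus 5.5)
The plan is to derive this from Furstenberg's theorem applied to the i.i.d. random product with law $\nu_*$. By Lemma~\ref{lem:regeneration}, the return blocks $G_r$ are i.i.d. with law $\nu_*$, and since $|\det\widehat A_i|=1$, every $G_r$ lies in $\SL^{\pm}(2,\RR)$. Lemma~\ref{lem:return-exponent} shows that the top exponent of the i.i.d. product is zero. Furstenberg's theorem, in the form valid for distributions with $\int\log\|g\|\dd\nu_*(g)<\infty$, states the following. If the closed group $H$ generated by $\operatorname{supp}\nu_*$ is noncompact and strongly irreducible, meaning that no finite union of lines in $\RR^2$ is $H$-invariant, then $\lambda_+(\nu_*)>0$. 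The integrability is supplied by the exponential tail of $\tau_1$ together with $\|G_0\|\le K^{\tau_1}$.

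So I argue by contraposition: assume $H$ is not compact. Then, since $\lambda_+(\nu_*)=0$, $H$ cannot be strongly irreducible. Some finite union of lines in $\RR^2$ is $H$-invariant, that is, a nonempty finite subset of $\PP^1$ is preserved. This is exactly the dichotomy in the statement.

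Two technical points need checking. First, Furstenberg's theorem is usually stated for $\SL(2,\RR)$, whereas here determinants may be $-1$. I would pass to the index-at-most-two subgroup $H^+=H\cap\SL(2,\RR)$ and argue in three steps.
\begin{enumerate}[label=\textup{(\roman*)}]
\item If $H$ is noncompact, so is $H^+$.
\item If $H^+$ preserves a finite set $F\subset\PP^1$, then $H$ preserves $F\cup gF$ for any $g\in H\setminus H^+$. This set is finite, and it is invariant because $g^2\in H^+$ and $H^+$ is normal.
\item Zero exponent for $\nu_*$ transfers to the appropriate induced measure on $H^+$. The cleanest way is to use the two-step product $G_1G_0$ together with the returns to $H^+$ of the random walk on $H/H^+\cong\mathbb Z/2$. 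Alternatively, one can cite the version of Furstenberg's theorem for $\SL^{\pm}$ directly, since the proof via the nonexistence of a $\nu_*$-invariant measure on $\PP^1$ goes through verbatim.
\end{enumerate}

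Second, the statement ``$\lambda_+=0$ with $H$ noncompact implies a finite invariant set'' is Furstenberg's criterion. It is proved via a $\nu_*$-stationary measure $\eta$ on $\PP^1$. When $\lambda_+=0$, the measure $\eta$ is $H$-invariant, by Furstenberg's invariance principle or Ledrappier's argument. An invariant probability on $\PP^1$ under a noncompact group must be supported on at most two points. Indeed, if $g_n\in H$ diverges, then $g_n\eta$ concentrates on at most two atoms after passing to a subsequence. I expect the main obstacle to be verifying these hypotheses (moment conditions, and handling the $\pm$ determinant) rather than any new idea. I would handle this by quoting the precise version of Furstenberg's theorem and the invariance-principle consequence, and by checking the log-moment condition from \eqref{eq:return-tail-expanded}.
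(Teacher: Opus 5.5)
Your proposal is correct and follows the paper's own argument. Like the paper, you take the contrapositive of Furstenberg's positivity criterion (which the paper quotes from Viana's book), apply it to $\nu_*$ using $\lambda_+(\nu_*)=0$ from Lemma~\ref{lem:return-exponent}, and use that in dimension two failure of strong irreducibility means a nonempty finite $H$-invariant subset of $\PP^1$; your extra checks (the log-moment bound from the exponential return tail, the reduction from $\SL^{\pm}(2,\RR)$ to the index-two subgroup $H^+$, and the invariance-principle sketch) are sound and just make explicit what the paper absorbs into the citation.
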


\begin{proof}
By Lemma~\ref{lem:return-exponent}, the i.i.d.\ return law
$\nu_*$ satisfies
$$
 \lambda_+(\nu_*)=0.
$$
Moreover, $\nu_*$ has a finite logarithmic moment and is supported
on $\SL^\pm(2,\RR)$.

Furstenberg's two-dimensional positivity alternative \cite[Theorem~6.11]{Viana} states that if $H$ is noncompact and acts
strongly irreducibly on $\RR^2$, then
$$
 \lambda_+(\nu_*)>0.
$$
Its contrapositive therefore implies that either $H$ is compact or
its action is not strongly irreducible.

In dimension two, failure of strong irreducibility is equivalent to
the existence of a nonempty finite $H$-invariant subset of
$\PP^1$.  This proves the lemma.
\end{proof}

\begin{lemma}
\label{lem:transport}
There are matrices $C_i\in\mathrm{GL}(2,\RR)$ such that every allowed edge matrix
\begin{equation}\label{eq:edge-matrix}
 T_{ij}=C_j^{-1}\widehat A_iC_i,
 \qquad P_{ij}>0,
\end{equation}
belongs to the return group $H$.  Consequently the compact or
finite invariant set alternative of Lemma~\ref{lem:return-group} holds
simultaneously for all allowed edges.
\end{lemma}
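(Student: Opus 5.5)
The plan is to build the matrices $C_i$ as transport products along fixed admissible paths from the base state $i_*$, and then to write each conjugated edge matrix as a quotient of two loop products based at $i_*$. Such loop products lie in $H$ because they factor into excursion products.

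\emph{Step 1: choice of transport paths.} By primitivity, for every state $i$ there is an admissible word $i_*=k_0,k_1,\ldots,k_m=i$ with $P_{k_{s}k_{s+1}}>0$. For $i=i_*$ I take the empty word. Set
$$
C_i=\widehat A_{k_{m-1}}\cdots\widehat A_{k_0},\qquad C_{i_*}=I .
$$
This matches the ordering convention of \eqref{eq:markov-cocycle-product}: leaving a state $k$ contributes the factor $\widehat A_k$ on the left. Similarly, I fix for each $j$ an admissible word from $j$ back to $i_*$, say $j=l_0,\ldots,l_r=i_*$ with $r\ge1$, and let $R_j=\widehat A_{l_{r-1}}\cdots\widehat A_{l_0}$ be its product.

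\emph{Step 2: loop products at $i_*$ lie in $H$.} Let $w$ be an admissible closed word that starts and ends at $i_*$. Cut $w$ at its successive visits to $i_*$. This splits $w$ into finitely many excursion words, each of which starts at $i_*$ and first returns to $i_*$ at its last letter. Each excursion word has positive $\mu_{P,i_*}$-probability, namely the product of its transition probabilities. Hence its matrix product is an atom of $\nu_*=\Law(G_0)$ and belongs to $\operatorname{supp}\nu_*\subset H$. The product of $\widehat A$ along $w$ is the ordered product of these excursion matrices, exactly as in \eqref{eq:return-products-concatenate}, so it lies in $H$.

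\emph{Step 3: edge matrices lie in $H$.} Fix an allowed edge with $P_{ij}>0$. Consider two closed words at $i_*$. The first is $\gamma_j$: the path $i_*\to j$ followed by the return path $j\to i_*$; its product is $R_jC_j$. The second is the path $i_*\to i$, then the edge $i\to j$, then the return path $j\to i_*$; its product is $R_j\widehat A_iC_i$. By Step 2 both products lie in $H$. Since $H$ is a group,
$$
T_{ij}=C_j^{-1}\widehat A_iC_i=(R_jC_j)^{-1}(R_j\widehat A_iC_i)\in H .
$$

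\emph{Step 4: the consequence.} Apply Lemma~\ref{lem:return-group}.
\begin{itemize}
\item If $H$ is compact, then every $T_{ij}$ lies in the same compact group.
\item If $H$ preserves a nonempty finite set $F\subset\PP^1$, then $T_{ij}F=F$ for every allowed edge. Equivalently, $\widehat A_i$ maps $C_iF$ onto $C_jF$, so the invariant configuration is transported consistently along the transition graph.
\end{itemize}

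The only delicate point is the bookkeeping in Step 2. There one has to check that the decomposition of a loop into excursions reproduces exactly the products $G_r$ in Lemma~\ref{lem:regeneration}, with the correct ordering of factors and the correct placement of $\widehat A_{i_*}$ at the start of each block. Everything else is formal once the transport matrices are fixed.
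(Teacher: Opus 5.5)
Your proposal is correct and follows the paper's proof essentially verbatim: transport products $C_i$ along paths from $i_*$, return products $R_j$, and the identity $T_{ij}=(R_jC_j)^{-1}(R_j\widehat A_iC_i)$, where both factors are based loops at $i_*$ that split into excursion products lying in $H$. Your Step 2 only spells out what the paper states in one line, namely that each excursion product is an atom of $\nu_*$ and therefore lies in $\operatorname{supp}\nu_*\subset H$.
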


\begin{proof}
Choose an admissible path from $i_*$ to each $i$, with normalized
product $C_i$, and a path from $i$ back to $i_*$, with product
$R_i$.  If $i\to j$ is allowed, both $R_jC_j$ and
$R_j\widehat A_iC_i$ are products around based loops at $i_*$.
Splitting a based loop at its intermediate visits to $i_*$ shows that
both products belong to $H$.  Hence
$$
 C_j^{-1}\widehat A_iC_i
 =(R_jC_j)^{-1}(R_j\widehat A_iC_i)\in H.
$$
This groupoid identity is the Markov replacement for a single semigroup
in the i.i.d. \cite{TV} setting.
\end{proof}

\begin{proposition}
\label{prop:normal-forms}
After the state dependent change of coordinates in
\eqref{eq:edge-matrix}, exactly one of the following alternatives holds:
\begin{enumerate}[label=\textup{(\roman*)}]
\item all $T_{ij}$ lie in a compact subgroup (conformal case);
\item all $T_{ij}$ are diagonal in a common ordered pair of lines (degenerate diagonal case);
\item all $T_{ij}$ are diagonal or anti-diagonal in a common unordered pair of lines (simply reducible case);
\item all $T_{ij}$ are upper triangular in a common invariant line,
      and the family is not simultaneously diagonalizable (degenerate triangular case).
\end{enumerate}
In case \textup{(iii)}, the cocycle becomes diagonal on the two-sheet
extension that records which member of the unordered pair is current.
\end{proposition}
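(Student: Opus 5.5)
By Lemma~\ref{lem:transport}, every edge matrix $T_{ij}$ lies in the return group $H$. It therefore suffices to classify closed subgroups $H\subset\SL^\pm(2,\RR)$ that are either compact or preserve a nonempty finite set $F\subset\PP^1$, as in Lemma~\ref{lem:return-group}. The classification of $H$ then transfers verbatim to the family $\{T_{ij}\}$.

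If $H$ is compact, we are in case~(i): conjugating by a single matrix puts $H$ inside $O(2)$. Since $H$ generates the cocycle, this conjugation can be absorbed into all $C_i$ simultaneously.

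Now assume $H$ is noncompact, and let $F$ be a minimal nonempty finite $H$-invariant subset of $\PP^1$. The key claim is that $\#F\le2$. To see this, suppose $\#F\ge3$. Then the finite-index subgroup $H_0$ fixing $F$ pointwise fixes three distinct lines. A matrix in $\GL(2,\RR)$ fixing three lines is scalar, and the scalars in $\SL^\pm$ are $\pm I$. Hence $H_0\subset\{\pm I\}$ and $H$ is finite, hence compact, a contradiction.

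So $\#F\in\{1,2\}$, and we split according to the size of $F$.

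\textbf{Case $\#F=1$.} All of $H$ fixes a line $L$. Choose a basis with $e_1\in L$; then every $T_{ij}$ is upper triangular.
\begin{itemize}
\item If the family $\{T_{ij}\}$, equivalently $H$, is simultaneously diagonalizable, then $H$ preserves a second line $L'$ and we are in case~(ii).
\item Otherwise we are in case~(iv).
\end{itemize}
Here one uses that simultaneous diagonalizability of the generators is the same as for $H$, since the set of matrices diagonal in a fixed basis is a closed group.

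\textbf{Case $\#F=2$.} Write $F=\{L_1,L_2\}$ and choose a basis adapted to $L_1,L_2$.
\begin{itemize}
\item If every element fixes each $L_k$, the $T_{ij}$ are diagonal in the ordered pair, which is case~(ii).
\item Otherwise some elements swap the two lines, and these are anti-diagonal, which is case~(iii).
\end{itemize}

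For the final assertion about case~(iii), define the index-two subgroup $H^+\subset H$ of elements preserving each line, and the homomorphism $\epsilon:H\to\{\pm1\}$ recording the swap. On the extended state space $X\times\{1,2\}$, with transitions $(i,s)\to(j,s\cdot\epsilon(T_{ij}))$, one reorders the basis according to the sheet $s$. This makes every extended edge matrix diagonal. The extended chain is well defined because $\epsilon(T_{ij})$ depends only on the edge.

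It remains to show the alternatives are exclusive. Make ``exactly one'' precise by taking the first applicable case in the order listed:
\begin{itemize}
\item (iv) excludes (ii) and (iii) by definition.
\item (iii) is meant with at least one swap, which excludes (ii).
\item (i) is meant for compact $H$, and the remaining cases are meant for noncompact $H$.
\end{itemize}

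The main obstacle I expect is making this exclusivity clean. A compact $H$ can also be diagonal, for example $H=\{\pm I\}$ or a finite dihedral group, so the cases must be read as a decision procedure with priority to (i). The bound $\#F\le2$ for noncompact $H$ is the substantive step, and it is the one the argument above handles.
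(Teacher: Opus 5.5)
Your proposal is correct and follows essentially the same route as the paper. In the compact case you use an invariant inner product, i.e.\ conjugation into $O(2)$. In the noncompact case you force $\#F\le 2$ because the kernel of the permutation action on a finite invariant set lies in $\{\pm I\}$, and you then make the same triangular/diagonal/swap case split and the same two-sheet lift.

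Your explicit priority convention for ``exactly one'' is a genuine improvement: the paper never checks exclusivity, and for instance $H=\{\pm I\}$, or a finite dihedral group, satisfies (i) together with (ii) or (iii). One small adjustment: make the swap/no-swap split on the edge matrices $T_{ij}$ themselves rather than on elements of $H$. The two are equivalent by the same closedness argument you already use for simultaneous diagonalizability.
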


\begin{proof}
If $H$ is compact, let $m_H$ be Haar probability and average the
Euclidean inner product:
$$
 \langle v,w\rangle_H
 =\int_H\langle hv,hw\rangle\,\dd m_H(h).
$$
This is positive definite and $H$-invariant.  In an orthonormal basis
for this inner product every $T_{ij}\in H$ is orthogonal, proving
case \textup{(i)}.

Assume that $H$ is not compact and choose a nonempty finite
$H$-invariant set $\mathcal F\subset\PP^1$ of minimal cardinality.
We claim that $\#\mathcal F\le2$.  Otherwise the action on
$\mathcal F$ defines a homomorphism from $H$ to the finite
permutation group of $\mathcal F$.  Its kernel fixes at least three
projective points.  A projective transformation of $\PP^1$ fixing
three points is the identity, so every matrix in the kernel is scalar.
Since $|\det h|=1$, the only scalar possibilities are $I$ and
$-I$.  Both kernel and image are finite; hence $H$ is finite and
compact, a contradiction.

If $\mathcal F=\{E,F\}$, every element of $H$ permutes the two
lines.  In a basis $(e_E,e_F)$, every allowed edge matrix is therefore
diagonal or anti diagonal.  If both lines are individually invariant we
are in case \textup{(ii)}.  Otherwise at least one edge interchanges
them, giving case \textup{(iii)}.  Introducing a sheet
$\varepsilon\in\{0,1\}$ that records the current line turns every
edge into a diagonal one: an anti-diagonal edge changes the sheet and a
diagonal edge preserves it.

If $\mathcal F=\{E\}$, all elements fix $E$ and are upper
triangular in a basis whose first vector spans $E$.  If there is a
second common invariant line, the family is simultaneously diagonal and
belongs to case \textup{(ii)}.  If not, it is the triangular
case \textup{(iv)}. 
\end{proof}

\begin{remark}[Thieullen's measurable normal forms]
The four cases agree with the rotation, centered triangular,
flip-diagonal, and nonzero diagonal alternatives in Thieullen's theorem
as presented in \cite[Theorem~6.12]{Viana}.  Thieullen uses an arbitrary
measurable conjugacy on the base.  Proposition~\ref{prop:normal-forms}
is stronger for our purpose because it produces finitely many bounded
state-dependent matrices $C_i$.
\end{remark}
\section{Martingale coboundary decomposition}
\label{sec:additive}

\begin{lemma}[Markov Poisson equation]
\label{lem:poisson}
Let $P$ be an irreducible stochastic matrix with stationary
probability vector $p$, and let $\xi(i,j)$ be a real function on
the allowed edges. Set
$$
 g(i)=\sum_j P_{ij}\xi(i,j)
$$
and assume that $\xi$ is centered:
$$
 \sum_{i,j}p_iP_{ij}\xi(i,j)=0.
$$
Then there exists a unique function $u:X\to\RR$ satisfying
\begin{equation}\label{eq:poisson}
 (I-P)u=g,
 \qquad
 \sum_i p_i u(i)=0.
\end{equation}
Let $\Pi$ be the stationary projection
$$
 (\Pi h)(i)
 =
 \left(\sum_j p_jh(j)\right)\1(i).
$$
Then
\begin{equation}\label{eq:fundamental-matrix}
 u=(I-P+\Pi)^{-1}g.
\end{equation}
If $P$ is primitive, then
\begin{equation}\label{eq:potential-series}
 u(i)
 =
 \sum_{m=0}^{\infty}(P^mg)(i)
 =
 \sum_{m=0}^{\infty}\EE_i[g(X_m)].
\end{equation}
\end{lemma}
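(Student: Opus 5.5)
We work on the space of column vectors (functions on $X$), where $P$ acts by $(Ph)(i)=\sum_jP_{ij}h(j)$ and $p$ acts as a linear functional $h\mapsto ph=\sum_ip_ih(i)$. Let
$$
 V_0=\{h:\ ph=0\}.
$$
The centering hypothesis says exactly $pg=\sum_{i,j}p_iP_{ij}\xi(i,j)=0$, so $g\in V_0$. Since $pP=p$, the space $V_0$ is $P$-invariant, and hence also $(I-P)$-invariant.

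\emph{Existence and uniqueness.} By irreducibility (Perron--Frobenius), the eigenvalue $1$ of $P$ is simple, with right eigenvector $\1$ and left eigenvector $p$. Since $p\1=1\neq0$, we have $\1\notin V_0$, so $\RR^N=\RR\1\oplus V_0$. The kernel of $I-P$ is $\RR\1$, which meets $V_0$ only at $0$. Therefore $(I-P)|_{V_0}$ is injective, and hence bijective on the finite-dimensional space $V_0$. This gives a unique $u\in V_0$ with $(I-P)u=g$.

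Uniqueness among all solutions satisfying the normalization: two solutions differ by an element of $\ker(I-P)=\RR\1$. The normalization $pu=0$ then forces that multiple of $\1$ to vanish.

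\emph{Fundamental matrix.} We first check that $I-P+\Pi$ is invertible. Suppose $(I-P+\Pi)h=0$. Apply $p$: since $p(I-P)=0$ and $p\Pi h=ph$, we get $ph=0$. Hence $\Pi h=0$, so $(I-P)h=0$ and $h\in\RR\1$. Combined with $ph=0$, this gives $h=0$.

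For our solution $u$ we have $\Pi u=(pu)\1=0$, so
$$
 (I-P+\Pi)u=(I-P)u=g,
$$
which is \eqref{eq:fundamental-matrix}.

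\emph{Series representation.} If $P$ is primitive, then $P^m\to\Pi$ geometrically, i.e. $\|P^m-\Pi\|\le C\rho^m$ for some $\rho<1$. This follows from Perron--Frobenius, since every other eigenvalue then has modulus $<1$. Alternatively, one can use the block minorization argument of Lemma~\ref{lem:regeneration} with a coupling.

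Since $g\in V_0$, we have $\Pi g=0$, so $\|P^mg\|_\infty\le C\rho^m\|g\|_\infty$ and the series $w=\sum_{m\ge0}P^mg$ converges absolutely. Telescoping gives
$$
 (I-P)\sum_{m=0}^{M}P^mg=g-P^{M+1}g\longrightarrow g .
$$
Moreover $pP^mg=pg=0$ for every $m$, so $w\in V_0$. By the uniqueness established above, $w=u$.

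Finally, $(P^mg)(i)=\EE_i[g(X_m)]$ by the Markov property, which gives \eqref{eq:potential-series}.

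The only substantive input is the geometric convergence $P^m\to\Pi$ under primitivity. We take it from Perron--Frobenius spectral gap theory rather than reproving it.
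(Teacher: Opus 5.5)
Your proof is correct and follows essentially the same route as the paper. Centering places $g$ in the $p$-mean-zero subspace, and irreducibility gives $\ker(I-P)=\RR\1$. A finite-dimensional injectivity argument then gives existence and uniqueness, and the same kernel argument shows $I-P+\Pi$ is invertible. Under primitivity, geometric convergence $P^m\to\Pi$ together with telescoping and uniqueness identifies the series with $u$. The differences are cosmetic: you get existence by restricting $I-P$ to the invariant subspace $V_0$ instead of the paper's rank count $\operatorname{range}(I-P)=H_0$, and you check the fundamental-matrix formula by applying $I-P+\Pi$ to the already constructed $u$ instead of defining $u$ through the inverse.
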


\begin{proof}
By the centering assumption,
$$
 \sum_i p_i g(i)
 =
 \sum_{i,j}p_iP_{ij}\xi(i,j)
 =
 0.
$$
Let
$$
 H_0
 =
 \left\{
   h:X\to\RR:
   \sum_i p_i h(i)=0
 \right\}.
$$
Since $pP=p$, the range of $I-P$ is contained in $H_0$.
Irreducibility implies
$$
 \ker(I-P)=\operatorname{span}\{\1\}.
$$
Thus $\operatorname{range}(I-P)$ and $H_0$ both have dimension
$|X|-1$, and hence
$$
 \operatorname{range}(I-P)=H_0.
$$
Since $g\in H_0$, the equation $(I-P)u=g$ has a solution.
Any two solutions differ by a constant, so the normalization
$\sum_i p_i u(i)=0$ makes the solution unique.

We next show that $I-P+\Pi$ is invertible. If
$$
 (I-P+\Pi)h=0,
$$
multiplication by $p$ gives
$$
 \sum_i p_i h(i)=0.
$$
Hence $\Pi h=0$, and therefore $(I-P)h=0$. Thus $h$ is
constant and centered, so $h=0$. Therefore $I-P+\Pi$ is
injective and, since the space is finite-dimensional, invertible.

Set
$$
 u=(I-P+\Pi)^{-1}g.
$$
Since
$$
 (I-P+\Pi)u=g,
$$
multiplication by $p$ gives
$$
 \sum_i p_i u(i)
 =
 \sum_i p_i g(i)
 =
 0.
$$
Thus $\Pi u=0$, and the preceding equation reduces to
$(I-P)u=g$. This proves \eqref{eq:fundamental-matrix}.

Suppose now that $P$ is primitive. Then there exist $C>0$ and
$\rho\in(0,1)$ such that
$$
 \|P^m-\Pi\|\le C\rho^m.
$$
Since $\Pi g=0$,
$$
 \|P^mg\|
 =
 \|(P^m-\Pi)g\|
 \le
 C\rho^m\|g\|.
$$
Therefore the series
$$
 \widetilde{u}
 =
 \sum_{m=0}^{\infty}P^mg
$$
converges absolutely. Its partial sums satisfy
$$
 (I-P)\sum_{m=0}^{N}P^mg
 =
 g-P^{N+1}g.
$$
Letting $N\to\infty$ gives
$$
 (I-P)\widetilde{u}=g.
$$
Moreover, $pP^mg=pg=0$ for every $m$, so $\widetilde{u}$ is
centered. Uniqueness therefore gives
$$
 u=\widetilde{u}
   =\sum_{m=0}^{\infty}P^mg.
$$
Finally,
$$
 (P^mg)(i)=\EE_i[g(X_m)],
$$
which proves \eqref{eq:potential-series}.
\end{proof}

The following decomposition is the finite state Markov version of the
Gordin martingale coboundary method; see Gordin~\cite{Gordin1969},
Gordin--Lifshits~\cite{GordinLifshits1978}, and
Glynn--Meyn~\cite{GlynnMeyn1996}.  General treatments through the
Markov Poisson equation may also be found in
Meyn--Tweedie~\cite[Chapter~17]{MeynTweedie}.
%
%
%
\begin{lemma}[martingale coboundary decomposition]
\label{lem:martingale-coboundary}
Define
\begin{equation}\label{eq:martingale-difference}
 D_{k+1}=\xi(X_k,X_{k+1})-g(X_k)
 +u(X_{k+1})-(Pu)(X_k).
\end{equation}
Then $M_n=\sum_{k=0}^{n-1}D_{k+1}$ is a martingale and
\begin{equation}\label{eq:martingale-coboundary}
 S_n:=\sum_{k=0}^{n-1}\xi(X_k,X_{k+1})
 =M_n+u(X_0)-u(X_n).
\end{equation}
\end{lemma}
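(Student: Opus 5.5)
The proof splits into two independent verifications, both elementary. We first show that each $D_{k+1}$ is a martingale difference for the natural filtration $\mathcal F_k=\sigma(X_0,\ldots,X_k)$. Then we prove the algebraic identity \eqref{eq:martingale-coboundary} by telescoping, using the Poisson equation \eqref{eq:poisson} from Lemma~\ref{lem:poisson}.

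\emph{Martingale property.} Since $X$ is finite, $\xi$ and $u$ are bounded, so every $D_{k+1}$ is bounded and hence integrable. It is $\mathcal F_{k+1}$-measurable. By the Markov property, conditionally on $\mathcal F_k$ the next state $X_{k+1}$ has law $P_{X_k\bullet}$. Therefore
$$
 \EE\bigl[\xi(X_k,X_{k+1})\mid\mathcal F_k\bigr]=\sum_jP_{X_kj}\xi(X_k,j)=g(X_k),
 \qquad
 \EE\bigl[u(X_{k+1})\mid\mathcal F_k\bigr]=(Pu)(X_k).
$$
It follows that $\EE[D_{k+1}\mid\mathcal F_k]=0$. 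Summing over $k$ shows that $M_n$ is an $(\mathcal F_n)$-martingale with $M_0=0$. This holds under $\mu_{P,i}$ for each $i$, and hence under the stationary measure $\mu_P$ by \eqref{eq:stationary-expectation-decomposition}.

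\emph{Coboundary identity.} The Poisson equation gives $g=u-Pu$, so
$$
 D_{k+1}=\xi(X_k,X_{k+1})-u(X_k)+(Pu)(X_k)+u(X_{k+1})-(Pu)(X_k)
 =\xi(X_k,X_{k+1})+u(X_{k+1})-u(X_k).
$$
Summing from $k=0$ to $n-1$, the $u$-terms telescope, and we obtain $M_n=S_n+u(X_n)-u(X_0)$. This is exactly \eqref{eq:martingale-coboundary}.

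There is no real obstacle here; the only point needing care is the use of the Poisson equation to cancel the $g$ and $Pu$ terms. We also note for later use that $\|u\|_\infty<\infty$ by Lemma~\ref{lem:poisson}. Consequently the coboundary $u(X_0)-u(X_n)$ is uniformly bounded by $2\|u\|_\infty$, and the increments satisfy $|D_{k+1}|\le 2\|\xi\|_\infty+2\|u\|_\infty$. These bounds are what the subsequent maximal inequality will use.
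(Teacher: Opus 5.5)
Your proof is correct and follows the paper's argument essentially verbatim. The Markov property gives $\EE[D_{k+1}\mid\mathcal F_k]=0$, the Poisson identity $g=u-Pu$ reduces $D_{k+1}$ to $\xi(X_k,X_{k+1})+u(X_{k+1})-u(X_k)$, and summing telescopes; your extra remarks on integrability, on passing from $\mu_{P,i}$ to $\mu_P$, and on the increment bound are correct and are what Lemma~\ref{lem:diffusion} uses implicitly.
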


\begin{proof}
Let $\mathcal F_k=\sigma(X_0,\ldots,X_k)$.  Since the conditional law
of $X_{k+1}$ given $\mathcal F_k$ is the row
$P_{X_k\bullet}$,
\begin{align*}
 \EE\bigl(\xi(X_k,X_{k+1})\mid\mathcal F_k\bigr)&=g(X_k),\\
 \EE\bigl(u(X_{k+1})\mid\mathcal F_k\bigr)&=(Pu)(X_k).
\end{align*}
Therefore $\EE(D_{k+1}\mid\mathcal F_k)=0$, so $(M_n)$ is a
martingale.  Because the Poisson equation says $g=u-Pu$,
\begin{align*}
 D_{k+1}
 &=\xi(X_k,X_{k+1})-u(X_k)+u(X_{k+1}),
\end{align*}
or equivalently
$$
 \xi(X_k,X_{k+1})
 =D_{k+1}+u(X_k)-u(X_{k+1}).
$$
Summing from $k=0$ to $n-1$ telescopes the last two terms and gives
\eqref{eq:martingale-coboundary} exactly.
\end{proof}

\begin{lemma}
\label{lem:diffusion}
There are constants \(C,c>0\) such that
\begin{equation}\label{eq:diffusion-tail}
 \Prob\left(\max_{m\le n}|S_m|>C+t\right)
 \le2\exp\left(-c\frac{t^2}{n}\right),
\end{equation}
and
\begin{equation}\label{eq:diffusion-mean}
 \EE|S_n|+\EE\max_{m\le n}|S_m|\le C\sqrt n.
\end{equation}
\end{lemma}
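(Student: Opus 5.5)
The plan is to apply the decomposition of Lemma~\ref{lem:martingale-coboundary}, $S_m=M_m+u(X_0)-u(X_m)$, and to handle the two pieces separately. Since $X$ is finite, $u$ is bounded, and we put $C_1=2\|u\|_\infty$. This gives the deterministic inequality
$$
 \max_{m\le n}|S_m|\le \max_{m\le n}|M_m|+C_1 .
$$
Both claims are therefore reduced to statements about the martingale $M$.

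Next we bound the increments uniformly. From \eqref{eq:martingale-difference} we get $|D_{k+1}|\le 2\|\xi\|_\infty+2\|u\|_\infty=:L$, using $|g|\le\|\xi\|_\infty$ and $|Pu|\le\|u\|_\infty$. Here $\xi$ is bounded because there are finitely many edges. So $(M_m)$ is a martingale with increments bounded by $L$ with respect to $\mathcal F_k$.

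For the maximal tail we invoke the Azuma--Hoeffding inequality. To control the maximum rather than only the endpoint, we combine it with Doob's maximal inequality applied to the nonnegative submartingales $e^{\pm\theta M_m}$. Hoeffding's lemma gives $\EE(e^{\theta D_{k+1}}\mid\mathcal F_k)\le e^{\theta^2L^2/2}$, and therefore
$$
 \Prob\Bigl(\max_{m\le n}M_m>t\Bigr)\le e^{-\theta t}\EE e^{\theta M_n}\le e^{-\theta t+n\theta^2L^2/2}.
$$
Choosing $\theta=t/(nL^2)$ gives the bound $e^{-t^2/(2nL^2)}$. The same estimate holds for $-M$. Combined with the first step this yields \eqref{eq:diffusion-tail} with $C=C_1$ and $c=1/(2L^2)$.

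For the mean bound \eqref{eq:diffusion-mean}, we integrate the tail:
$$
 \EE\max_{m\le n}|S_m|\le C+\int_0^\infty 2e^{-ct^2/n}\,dt=C+\sqrt{\pi n/c}\le C'\sqrt n\qquad(n\ge1).
$$
Then $\EE|S_n|\le\EE\max_{m\le n}|S_m|$ finishes the proof. Alternatively, one can use Doob's $L^2$ inequality together with orthogonality of increments, $\EE M_n^2\le nL^2$.

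The only point requiring care is to make the constants uniform over the initial law. This includes stationary $\mu_P$ and each $\mu_{P,i}$, which are what is used later. It also includes uniformity over perturbed $(Q,\xi)$ in the fixed neighbourhood, since the constants depend only on $\|\xi\|_\infty$ and $\|u\|_\infty$. The martingale argument above works conditionally on $X_0$, so any initial law is fine. Uniformity of $\|u\|_\infty$ follows from the representation $u=(I-P+\Pi)^{-1}g$ in Lemma~\ref{lem:poisson}, because that inverse depends continuously on $P$ near a primitive matrix.
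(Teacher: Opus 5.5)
Your proof is correct and follows the paper's argument: the same martingale--coboundary split with the $2\|u\|_\infty$ endpoint correction, uniformly bounded increments, the conditional Hoeffding lemma optimized at $\theta=t/(nL^2)$ to get $2e^{-t^2/(2nL^2)}$, and integration of the Gaussian tail to get the $\sqrt n$ mean bound. The only cosmetic difference is where the maximal inequality comes from. You apply Doob's inequality to the submartingale $e^{\theta M_m}$, whereas the paper applies optional stopping at the first passage time to the supermartingale $\exp(hM_m-\tfrac12h^2L^2m)$; these are equivalent and give identical constants.
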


\begin{proof}
Since $X$ is finite and $\xi$ is defined on the finite set
of allowed edges, there exists $L<\infty$ such that
\begin{equation}\label{eq:bounded-martingale-increments}
 |D_{k+1}|\le L
 \qquad\text{almost surely for every }k.
\end{equation}
The bound is independent of the initial state of the chain.

If $L=0$, then $M_m=0$ for every $m$, and the conclusion follows
immediately from the bounded coboundary term.  We therefore assume
$L>0$.

We know that $\EE[D_{k+1}\mid\mathcal F_k]=0$
and $-L\le D_{k+1}\le L$.
The conditional version of Hoeffding's lemma therefore gives, for every $h\in\RR$,
\begin{equation}\label{eq:conditional-hoeffding}
 \EE\left[
   e^{hD_{k+1}}
   \mid\mathcal F_k
 \right]
 \le
 \exp\left(\frac{h^2L^2}{2}\right).
\end{equation}
For a fixed $h\in\RR$, define
\begin{equation}\label{eq:exponential-supermartingale}
 Z_m(h)
 =
 \exp\left(
   hM_m-\frac12h^2L^2m
 \right).
\end{equation}
Then
\begin{align}
 \EE[Z_{m+1}(h)\mid\mathcal F_m]
 &=
 \EE\left[
   \exp\left(
     hM_m+hD_{m+1}
     -\frac12h^2L^2(m+1)
   \right)
   \,\middle|\,\mathcal F_m
 \right]
 \notag\\
 &=
 Z_m(h)
 \EE\left[
   \exp\left(
     hD_{m+1}-\frac12h^2L^2
   \right)
   \,\middle|\,\mathcal F_m
 \right]
 \notag\\
 &\le Z_m(h),
\end{align}
where the last inequality follows from
\eqref{eq:conditional-hoeffding}.  Hence
$$
 (Z_m(h),\mathcal F_m)_{m\ge0}
$$
is a nonnegative supermartingale, with $Z_0(h)=1$.

Fix $t>0$ and $h>0$, and define the stopping time $\tau = \inf\{0\le m\le n:M_m\ge t\}$,
with the convention $\tau=n+1$ if the set is empty.  On the event
 $\left\{\max_{0\le m\le n}M_m\ge t\right\}$, we have $\tau\le n$, and therefore
\begin{align}
 Z_\tau(h)
 &=
 \exp\left(
   hM_\tau-\frac12h^2L^2\tau
 \right)
 \notag\\
 &\ge
 \exp\left(
   ht-\frac12h^2L^2n
 \right).
 \label{eq:stopped-exponential-lower-bound}
\end{align}

The optional-stopping inequality for the nonnegative supermartingale
$Z_m(h)$ gives
$$
 \EE[Z_{\tau\wedge n}(h)]\le Z_0(h)=1 \qquad \text{ where } \tau \wedge n = \min\{\tau, n\}
$$
Using \eqref{eq:stopped-exponential-lower-bound}, we obtain
\begin{align}
 1
 &\ge
 \EE[Z_{\tau\wedge n}(h)]
 \notag\\
 &\ge
 \exp\left(
   ht-\frac12h^2L^2n
 \right)
 \Prob\left(
   \max_{0\le m\le n}M_m\ge t
 \right).
\end{align}
Consequently,
\begin{equation}\label{eq:martingale-positive-tail-h}
 \Prob\left(
   \max_{0\le m\le n}M_m\ge t
 \right)
 \le
 \exp\left(
   -ht+\frac12h^2L^2n
 \right).
\end{equation}

The exponent on the right is minimized by $h=t/nL^2$.
Substituting this value into
\eqref{eq:martingale-positive-tail-h} gives
\begin{equation}\label{eq:martingale-positive-tail}
 \Prob\left(
   \max_{0\le m\le n}M_m\ge t
 \right)
 \le
 \exp\left(
   -\frac{t^2}{2nL^2}
 \right).
\end{equation}

Apply the same argument to the martingale $-M_m$ gives 
\begin{equation}\label{eq:martingale-maximal-expanded}
 \Prob\left(
   \max_{0\le m\le n}|M_m|\ge t
 \right)
 \le
 2\exp\left(
   -\frac{t^2}{2nL^2}
 \right).
\end{equation}

Since
$$
 |S_m|
 \le
 |M_m|+|u(X_0)|+|u(X_m)|
 \le
 |M_m|+2\|u\|_\infty.
$$
Therefore,
\begin{equation}\label{eq:max-S-by-max-M}
 \max_{0\le m\le n}|S_m|
 \le
 \max_{0\le m\le n}|M_m|
 +2\|u\|_\infty.
\end{equation}
It follows that
$$
 \left\{
   \max_{0\le m\le n}|S_m|
   >
   2\|u\|_\infty+t
 \right\}
 \subseteq
 \left\{
   \max_{0\le m\le n}|M_m|>t
 \right\}.
$$
Using \eqref{eq:martingale-maximal-expanded}, we obtain
\begin{equation}\label{eq:S-maximal-tail-explicit}
 \Prob\left(
   \max_{0\le m\le n}|S_m|
   >
   2\|u\|_\infty+t
 \right)
 \le
 2\exp\left(
   -\frac{t^2}{2nL^2}
 \right).
\end{equation}
This proves \eqref{eq:diffusion-tail}, with
$$
 C=2\|u\|_\infty,
 \qquad
 c=\frac1{2L^2},
$$
after enlarging $C$, if necessary.

For every nonnegative random variable $Y$,
\begin{equation}\label{eq:tail-integral-identity}
 \EE Y
 =
 \int_0^\infty\Prob(Y>s)\,\dd s.
\end{equation}
Apply this to $Y=\max_{0\le m\le n}|S_m|$.
By \eqref{eq:max-S-by-max-M},
\begin{align}
 \EE\max_{0\le m\le n}|S_m|
 &\le
 2\|u\|_\infty
 +
 \EE\max_{0\le m\le n}|M_m|
 \notag\\
 &=
 2\|u\|_\infty
 +
 \int_0^\infty
 \Prob\left(
   \max_{0\le m\le n}|M_m|>t
 \right)
 \,\dd t.
\end{align}
Using \eqref{eq:martingale-maximal-expanded},
\begin{align}
 \EE\max_{0\le m\le n}|S_m|
 &\le
 2\|u\|_\infty
 +
 2\int_0^\infty
 \exp\left(
   -\frac{t^2}{2nL^2}
 \right)
 \,\dd t.
 \label{eq:S-maximal-expectation-integral}
\end{align}
Make the change of variables $t=L\sqrt{2n}\,s$.
Then
\begin{align}
 2\int_0^\infty
 \exp\left(
   -\frac{t^2}{2nL^2}
 \right)
 \,\dd t
 &=
 2L\sqrt{2n}
 \int_0^\infty e^{-s^2}\,\dd s
 \notag\\
 &=
 2L\sqrt{2n}\,\frac{\sqrt\pi}{2}
 \notag\\
 &=
 L\sqrt{2\pi n}.
\end{align}
Thus
$$
 \EE\max_{0\le m\le n}|S_m|
 \le
 2\|u\|_\infty+L\sqrt{2\pi n}.
$$
Since $n\ge1$, the bounded first term can be absorbed into a multiple of $\sqrt n$.  Therefore,
\begin{equation}\label{eq:S-maximal-mean}
 \EE\max_{0\le m\le n}|S_m|
 \le C\sqrt n.
\end{equation}

Finally,
$$
 |S_n|
 \le
 \max_{0\le m\le n}|S_m|,
$$
and hence
$$
 \EE|S_n|
 \le
 \EE\max_{0\le m\le n}|S_m|
 \le C\sqrt n.
$$
After enlarging $C$ once more, we conclude that
$$
 \EE|S_n|
 +
 \EE\max_{0\le m\le n}|S_m|
 \le C\sqrt n.
$$
This proves \eqref{eq:diffusion-mean}.
\end{proof}

\section{Random walk estimates}
\label{sec:zero-gap}
For an admissible path $X_0,\ldots,X_n$, let
\begin{equation}\label{eq:edge-product-Gn}
 G_n=T_{X_{n-1}X_n}\cdots T_{X_0X_1}.
\end{equation}
Products in the edge coordinates satisfy
\begin{equation}\label{eq:endpoint-conjugacy}
 G_n
 =C_{X_n}^{-1}\widehat A^{(n)}(X)C_{X_0}.
\end{equation}
Choose
\begin{equation}\label{eq:LC-definition}
 L_C=\max_i\bigl\{
 \|C_i\|,\|C_i^{-1}\|,
 |\det C_i|,|\det C_i|^{-1}
 \bigr\}\ge1.
\end{equation}
Submultiplicativity gives
$$
 \|G_n\|
 \le L_C^2\|\widehat A^{(n)}(X)\|.
$$
Conversely, solving \eqref{eq:endpoint-conjugacy} for the original
product gives
$$
 \widehat A^{(n)}(X)=C_{X_n}G_nC_{X_0}^{-1},
 \qquad
 \|\widehat A^{(n)}(X)\|\le L_C^2\|G_n\|.
$$
Therefore
\begin{equation}\label{eq:endpoint-norm-bound-detailed}
 \left|\log\|G_n\|-
 \log\|\widehat A^{(n)}(X)\|\right|
 \le2\log L_C.
\end{equation}
Moreover, $|\det\widehat A_i|=1$, so taking determinants in
\eqref{eq:endpoint-conjugacy} gives
$$
 \det G_n
 =\det(C_{X_n}^{-1})\det(C_{X_0})
$$
and hence
\begin{equation}\label{eq:endpoint-determinant-bound-detailed}
 |\log|\det G_n||\le2\log L_C.
\end{equation}

\begin{lemma}[compact, degenerate diagonal and simply reducible estimates]
\label{lem:compact-diagonal}
In case \textup{(i)} of Proposition~\ref{prop:normal-forms},
\begin{equation}\label{eq:compact-growth}
 \EE\log\|\widehat A^{(n)}\|\le C.
\end{equation}
In cases \textup{(ii)} and \textup{(iii)},
\begin{equation}\label{eq:diagonal-growth}
 \EE\log\|\widehat A^{(n)}\|\le C\sqrt n.
\end{equation}
\end{lemma}

\begin{proof}
We treat the three cases separately.

\medskip
\noindent
\textbf{Case (i): compact (conformal).}

This is exactly the Markov analogue of the conformal case in
Tall--Viana~\cite{TV}.  The only apparent difference is that, before the state dependent gauge change, the conformal inner product is allowed to depend on the current Markov state.

Let $\|\cdot\|_H$ be the invariant norm obtained by averaging an
inner product over the compact return group.  Every allowed edge
matrix is an isometry for this norm:
$$
 \|T_{ij}v\|_H=\|v\|_H
 \qquad(P_{ij}>0).
$$
Pull this single edge coordinate norm back to the original fiber over
each state by defining
\begin{equation}\label{eq:state-dependent-conformal-metric}
 \langle v,w\rangle_i
 :=\langle C_i^{-1}v,C_i^{-1}w\rangle_H,
 \qquad v,w\in\RR^2.
\end{equation}
Using $T_{ij}=C_j^{-1}\widehat A_iC_i$, we obtain, for every allowed
transition $i\to j$,
\begin{align*}
 \langle\widehat A_iv,\widehat A_iw\rangle_j
 &=\langle C_j^{-1}\widehat A_iv,
           C_j^{-1}\widehat A_iw\rangle_H\\
 &=\langle T_{ij}C_i^{-1}v,T_{ij}C_i^{-1}w\rangle_H\\
 &=\langle C_i^{-1}v,C_i^{-1}w\rangle_H
  =\langle v,w\rangle_i.
\end{align*}
Thus the determinant normalized cocycle is isometric from the metric
over state $i$ to the metric over state $j$.  Restoring the scalar
factor $A_i=e^{c_i}\widehat A_i$ from
\eqref{eq:normalization} gives the conformal identity
\begin{equation}\label{eq:markov-conformal-identity}
 \langle A_iv,A_iw\rangle_j
 =e^{2c_i}\langle v,w\rangle_i,
 \qquad P_{ij}>0.
\end{equation}

Consequently, every admissible product satisfies
$$
 \|G_nv\|_H=\|v\|_H.
$$
Because all norms on $\RR^2$ are equivalent, there is $K_H\ge1$
such that
$$
 K_H^{-1}\|v\|\le\|v\|_H\le K_H\|v\|.
$$
For $\|v\|=1$, it follows that
$$
 \|G_nv\|
 \le K_H\|G_nv\|_H
 =K_H\|v\|_H
 \le K_H^2.
$$
Thus $\|G_n\|\le K_H^2$ pathwise and uniformly in $n$.  By
\eqref{eq:endpoint-norm-bound-detailed},
$$
 \log\|\widehat A^{(n)}(X)\|
 \le2\log K_H+2\log L_C.
$$
Taking expectations proves \eqref{eq:compact-growth}.

\medskip
\noindent
\textbf{Case (ii): degerate diagonal case.}

Choose vectors spanning the two invariant lines.  In this basis every
allowed edge matrix is diagonal:
$$
 T_{ij}=\begin{pmatrix}a_{ij}&0\\0&d_{ij}\end{pmatrix},
 \qquad a_{ij}d_{ij}\ne0.
$$
The Lyapunov exponents along the two coordinate lines are
$$
 \chi_a=\sum_{i,j}p_iP_{ij}\log|a_{ij}|,
 \qquad
 \chi_d=\sum_{i,j}p_iP_{ij}\log|d_{ij}|.
$$
They are precisely the two exponents of the normalized cocycle.  Since
both normalized exponents vanish,
$$
 \chi_a=\chi_d=0.
$$
Hence the logarithmic ratio observable
$$
 \xi(i,j)=\log\left|\frac{a_{ij}}{d_{ij}}\right|
$$
is centered:
\begin{equation}\label{eq:diagonal-ratio-centered-detailed}
 \sum_{i,j}p_iP_{ij}\xi(i,j)=\chi_a-\chi_d=0.
\end{equation}

Along an admissible path, put
$$
 A_n=\prod_{k=0}^{n-1}a_{X_kX_{k+1}},
 \qquad
 D_n=\prod_{k=0}^{n-1}d_{X_kX_{k+1}}.
$$
Then
$$
 G_n=\begin{pmatrix}A_n&0\\0&D_n\end{pmatrix}
$$
and
\begin{align*}
 S_n
 &=\sum_{k=0}^{n-1}\xi(X_k,X_{k+1})
  =\log|A_n| -\log|D_n|,\\
 R_n
 &=\log|\det G_n|
  =\log|A_n|+\log|D_n|.
\end{align*}
Solving for the two diagonal logarithms gives
$$
 \log|A_n|=\frac12(R_n+S_n),
 \qquad
 \log|D_n|=\frac12(R_n-S_n).
$$
The Euclidean operator norm of a diagonal matrix is the maximum of the
absolute values of its diagonal entries.  Therefore
\begin{align}
 \log\|G_n\|
 &=\max\{\log|A_n|,\log|D_n|\}\notag\\
 &=\frac12R_n+\frac12|S_n|.
 \label{eq:diagonal-norm-exact}
\end{align}
By \eqref{eq:endpoint-determinant-bound-detailed},
$|R_n|\le2\log L_C$, so
\begin{equation}\label{eq:diagonal-norm-pathwise-detailed}
 \log\|G_n\|\le\log L_C+\frac12|S_n|.
\end{equation}
The centering in \eqref{eq:diagonal-ratio-centered-detailed} permits
the use of Lemma~\ref{lem:diffusion}, which gives
$$
 \EE_P|S_n|\le C\sqrt n.
$$
Taking expectations in
\eqref{eq:diagonal-norm-pathwise-detailed} and using
\eqref{eq:endpoint-norm-bound-detailed}, we conclude that
$$
 \EE_P\log\|\widehat A^{(n)}(X)\|
 \le C+\frac12\EE_P|S_n|
 \le C\sqrt n.
$$

\medskip
\noindent
\textbf{Case (iii): Simply reducible case.}

Let $L^0,L^1$ be the invariant unordered pair.  For each allowed
edge, let $s_{ij}\in\{0,1\}$ indicate whether $T_{ij}$ preserves
or interchanges the two lines.  More precisely,
$$
 s_{ij}=0
 \quad\Longleftrightarrow\quad
 T_{ij}L^\varepsilon=L^\varepsilon
 \quad(\varepsilon=0,1),
$$
whereas
$$
 s_{ij}=1
 \quad\Longleftrightarrow\quad
 T_{ij}L^\varepsilon=L^{\varepsilon\oplus1}
 \quad(\varepsilon=0,1).
$$
Here and below $\oplus$ denotes addition in
$\mathbb Z/2\mathbb Z$.

Introduce the two-sheet state space
$$
 X
 =X\times\mathbb Z/2\mathbb Z.
$$
The state $(i,\varepsilon)$ records both the current Markov state
$i$ and the line $L^\varepsilon$ that is currently being followed.
For an allowed base edge $i\to j$, define the lifted edge map by
\begin{equation}\label{eq:two-sheet-transition-detailed}
 (i,\varepsilon)\longrightarrow
 (j,\varepsilon\mathbin\oplus s_{ij}).
\end{equation}
Equivalently, the lifted transition kernel is
\begin{equation}\label{eq:two-sheet-kernel}
 \widetilde P_{(i,\varepsilon),(j,\varepsilon')}
 =P_{ij}\,
  \1_{\{\varepsilon'=\varepsilon\oplus s_{ij}\}}.
\end{equation}
Its rows sum to one because, for each $j$, exactly one value of
$\varepsilon'$ is permitted.  If
$\widetilde X_n=(X_n,\varepsilon_n)$, then the base coordinate
evolves with transition matrix $P$, while the sheet coordinate is
updated deterministically according to
\begin{equation}\label{eq:two-sheet-recursion}
 \varepsilon_{n+1}
 =\varepsilon_n\oplus s_{X_nX_{n+1}},
 \qquad
 \varepsilon_n
 =\varepsilon_0\oplus
   \bigoplus_{k=0}^{n-1}s_{X_kX_{k+1}}.
\end{equation}
Thus the lift introduces no new randomness.

\medskip
\noindent
\emph{Diagonalization on the lift.}
Choose nonzero vectors $v^\varepsilon\in L^\varepsilon$.  There are
nonzero scalars $a_{ij}^{(\varepsilon)}$ such that
\begin{equation}\label{eq:two-sheet-line-multiplier}
 T_{ij}v^\varepsilon
 =a_{ij}^{(\varepsilon)}
  v^{\varepsilon\oplus s_{ij}}.
\end{equation}
At the lifted state $(i,\varepsilon)$, order the two lines as
$(L^\varepsilon,L^{\varepsilon\oplus1})$.  If
$\varepsilon'=\varepsilon\oplus s_{ij}$, then the matrix of
$T_{ij}$ from the ordered basis
$(v^\varepsilon,v^{\varepsilon\oplus1})$ to the ordered basis
$(v^{\varepsilon'},v^{\varepsilon'\oplus1})$ is
\begin{equation}\label{eq:two-sheet-diagonal-edge}
 \widetilde T_{(i,\varepsilon),(j,\varepsilon')}
 =\begin{pmatrix}
   a_{ij}^{(\varepsilon)}&0\\
   0&a_{ij}^{(\varepsilon\oplus1)}
  \end{pmatrix}.
\end{equation}
The underlying linear product is unchanged; only its source and target bases have been reordered.  Since there are only two such bases, passing between the lifted diagonal product and $G_n$ costs a uniform endpoint constant.

\medskip
\noindent
\emph{Stationary data.}
The lifted kernel always has the stationary probability vector
\begin{equation}\label{eq:two-sheet-symmetric-stationary}
 \widetilde p_{i,\varepsilon}=\frac{p_i}{2}.
\end{equation}
Indeed, for every $(j,\varepsilon')$,
\begin{align*}
 \sum_{i,\varepsilon}
 \widetilde p_{i,\varepsilon}
 \widetilde P_{(i,\varepsilon),(j,\varepsilon')}
 &=\sum_{i,\varepsilon}
   \frac{p_i}{2}P_{ij}
   \1_{\{\varepsilon'=\varepsilon\oplus s_{ij}\}}\\
 &=\frac12\sum_i p_iP_{ij}
  =\frac{p_j}{2}.
\end{align*}
The projection $\pi(i,\varepsilon)=i$ therefore sends the stationary
lifted chain to the original stationary chain with data $(p,P)$.

Although $P$ is primitive, $\widetilde P$ need not be irreducible.
There are two possibilities.  If some admissible loop has odd total
swap parity, then the two sheets communicate and the lift is
irreducible; its stationary vector is \eqref{eq:two-sheet-symmetric-stationary}.
If every admissible loop has even swap parity, there is a function
$\varphi:\mathcal I\to\mathbb Z/2\mathbb Z$ satisfying
\begin{equation}\label{eq:two-sheet-coboundary}
 s_{ij}=\varphi(i)\oplus\varphi(j)
 \qquad(P_{ij}>0).
\end{equation}
Indeed, fix a base state $i_*$ and define $\varphi(i)$ as the
swap parity of an admissible path from $i_*$ to $i$.  If two such
paths are chosen, append the same admissible path from $i$ back to
$i_*$.  The resulting based loops have even parity, so the two
definitions agree.  Appending one allowed edge $i\to j$ then gives
$\varphi(j)=\varphi(i)\oplus s_{ij}$, which is
\eqref{eq:two-sheet-coboundary}.  Along every lifted transition, the
quantity $\varepsilon\oplus\varphi(i)$ is therefore constant.
In this case the lift splits into the two closed irreducible classes
$$
 \mathcal C_c
 =\{(i,\varphi(i)\oplus c):i\in\mathcal I\},
 \qquad c\in\mathbb Z/2\mathbb Z,
$$
with classwise stationary probabilities
\begin{equation}\label{eq:two-sheet-class-stationary}
 \widetilde p^{\,c}_{i,\varepsilon}
 =p_i\,
  \1_{\{\varepsilon=\varphi(i)\oplus c\}}.
\end{equation}
The symmetric stationary law is their average:
$\widetilde p=(\widetilde p^{\,0}+\widetilde p^{\,1})/2$.
In either alternative, every recurrent class projects to the stationary
base law, because the projection of any classwise stationary vector is
a stationary vector for $P$, and $p$ is unique.

\medskip
\noindent
\emph{Classwise centering and projection downstairs.}
For a lifted edge
$z=(i,\varepsilon)\to z'=(j,\varepsilon')$, define
\begin{equation}\label{eq:two-sheet-ratio-observable}
 \widetilde\xi(z,z')
 =\log|a_{ij}^{(\varepsilon)}|
  -\log|a_{ij}^{(\varepsilon\oplus1)}|.
\end{equation}
Fix a recurrent class $\mathcal C$, and denote its restricted kernel
and stationary vector by
$\widetilde P^{\mathcal C}$ and
$\widetilde p^{\mathcal C}$.  For
$z=(i,\varepsilon)\to z'=(j,\varepsilon')$, put
$$
 a(z,z')=a_{ij}^{(\varepsilon)},
 \qquad
 d(z,z')=a_{ij}^{(\varepsilon\oplus1)}.
$$
The two diagonal line exponents are
\begin{align*}
 \widetilde\chi_0^{\mathcal C}
 &=\sum_{z,z'\in\mathcal C}
   \widetilde p_z^{\mathcal C}
   \widetilde P_{zz'}^{\mathcal C}
   \log|a(z,z')|,\\
 \widetilde\chi_1^{\mathcal C}
 &=\sum_{z,z'\in\mathcal C}
   \widetilde p_z^{\mathcal C}
   \widetilde P_{zz'}^{\mathcal C}
   \log|d(z,z')|.
\end{align*}
The class projects to the stationary base chain and the endpoint change
of basis is uniformly bounded.  Hence these are the two Lyapunov
exponents of the normalized original cocycle, possibly in the opposite
order.  Both are zero, and therefore
\begin{equation}\label{eq:two-sheet-centered}
 \sum_{z,z'\in\mathcal C}
 \widetilde p_z^{\mathcal C}
 \widetilde P_{zz'}^{\mathcal C}
 \widetilde\xi(z,z')
 =\widetilde\chi_0^{\mathcal C}
  -\widetilde\chi_1^{\mathcal C}=0.
\end{equation}
Lemma~\eqref{lem:diffusion} applied to the finite irreducible chain on $\mathcal C$, now gives the same $O(\sqrt n)$ bound as in the diagonal case.  Periodicity of a class causes no problem: the Poisson equation and martingale coboundary decomposition require only finite irreducibility.

Applying the preceding diagonal calculation on each recurrent class
gives
$$
 \EE_{\widetilde p^{\mathcal C}}
 \log\|G_n\|\le C_{\mathcal C}\sqrt n.
$$
There are only finitely many classes, so their constants may be
replaced by their maximum.  Under the symmetric law
\eqref{eq:two-sheet-symmetric-stationary}, every base path has exactly
two lifts, one for each initial sheet, each with half the probability
of the base path.  Since $G_n$ depends only on the base path,
\begin{equation}\label{eq:two-sheet-projection-expectation}
 \EE_{\widetilde p}\log\|G_n\|
 =\EE_p\log\|G_n\|.
\end{equation}
Thus averaging over the recurrent classes and projecting to the
original chain yields the same estimate downstairs.
Finally, \eqref{eq:endpoint-norm-bound-detailed} transfers it to
$\widehat A^{(n)}$.  This proves \eqref{eq:diagonal-growth} in both
cases (ii) and (iii).

\end{proof}

\subsection{The degenerate triangular case}
Write
\begin{equation}\label{eq:triangular-edge}
 T_{ij}=\begin{pmatrix}a_{ij}&b_{ij}\\0&d_{ij}\end{pmatrix},
 \qquad
 \rho_{ij}=\frac{a_{ij}}{d_{ij}},
 \qquad
 \beta_{ij}=\frac{b_{ij}}{d_{ij}}.
\end{equation}
The first coordinate line is invariant, and the action on the quotient
by that line is multiplication by $d_{ij}$.  Thus 
\begin{equation}\label{eq:triangular-line-exponents-detailed}
 \chi_a=\sum_{i,j}p_iP_{ij}\log|a_{ij}|,
 \qquad
 \chi_d=\sum_{i,j}p_iP_{ij}\log|d_{ij}|
\end{equation}
form the Lyapunov exponents of the triangular cocycle.  Both normalized exponents are zero, and therefore
$$
 \chi_a=\chi_d=0.
$$
Consequently, the logarithmic diagonal ratio is centered:
\begin{equation}\label{eq:critical-drift-article}
 \sum_{i,j}p_iP_{ij}\log|\rho_{ij}|=0.
\end{equation}

\begin{lemma}
\label{lem:affine-coordinate}
If
$$
 G_n=T_{X_{n-1}X_n}\cdots T_{X_0X_1}
 =\begin{pmatrix}A_n&B_n\\0&D_n\end{pmatrix},
 \qquad Z_n=\frac{B_n}{D_n},
$$
then
\begin{equation}\label{eq:affine-recursion}
 Z_{n+1}=\rho_nZ_n+\beta_n,
\end{equation}
and
\begin{equation}\label{eq:finite-perpetuity}
 Z_n=\sum_{k=0}^{n-1}\beta_k
       \prod_{\ell=k+1}^{n-1}\rho_\ell.
\end{equation}
\end{lemma}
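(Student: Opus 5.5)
The argument is a direct computation with $2\times2$ upper triangular matrices, followed by an induction on $n$. First we fix notation so that the indices in \eqref{eq:affine-recursion} make sense. Set $\rho_k=\rho_{X_kX_{k+1}}$ and $\beta_k=\beta_{X_kX_{k+1}}$, so these are the ratios from \eqref{eq:triangular-edge} evaluated along the $k$-th edge of the path. We also set $G_0=I$, which gives $A_0=D_0=1$, $B_0=0$ and hence $Z_0=0$. This convention makes the empty-sum case $n=0$ of \eqref{eq:finite-perpetuity} consistent.

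Next we multiply out $G_{n+1}=T_{X_nX_{n+1}}G_n$ entrywise:
$$
 \begin{pmatrix}a&b\\0&d\end{pmatrix}
 \begin{pmatrix}A_n&B_n\\0&D_n\end{pmatrix}
 =\begin{pmatrix}aA_n&aB_n+bD_n\\0&dD_n\end{pmatrix},
$$
where $a,b,d$ are the entries of $T_{X_nX_{n+1}}$. Reading off the entries gives $A_{n+1}=aA_n$, $B_{n+1}=aB_n+bD_n$ and $D_{n+1}=dD_n$. Each edge matrix is invertible and triangular, so $d_{ij}\neq0$, and therefore $D_n=\prod_k d_{X_kX_{k+1}}\neq0$ and $Z_n$ is well defined. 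Dividing the upper-right entry by $D_{n+1}$ gives
$$
 Z_{n+1}=\frac{aB_n+bD_n}{dD_n}=\frac ad Z_n+\frac bd=\rho_nZ_n+\beta_n,
$$
which is \eqref{eq:affine-recursion}.

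Finally, we prove \eqref{eq:finite-perpetuity} by induction on $n$, with base case $Z_0=0$ (the empty sum). For the inductive step, substitute the formula for $Z_n$ into the recursion:
$$
 Z_{n+1}=\rho_n\sum_{k=0}^{n-1}\beta_k\prod_{\ell=k+1}^{n-1}\rho_\ell+\beta_n.
$$
Multiplying by $\rho_n$ extends each product up to index $n$. The extra term $\beta_n$ is the $k=n$ summand, whose product over $\ell$ is empty and so equals $1$. Together these give the formula at $n+1$.

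There is no substantive obstacle; the only points needing care are the indexing conventions and the nonvanishing of $D_n$. For the paper's order of multiplication, note that the newest factor acts on the left, so the most recent $\rho$'s multiply the earlier $\beta$'s. This is the forward, rather than backward, perpetuity.
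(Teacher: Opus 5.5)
Your proposal is correct and follows essentially the same route as the paper: you multiply $T_{X_nX_{n+1}}G_n$ entrywise, divide the upper-right entry by $D_{n+1}=d_nD_n\neq0$ to get the affine recursion, and then induct from $Z_0=0$. Your explicit convention $G_0=I$ and the check that $D_n\neq0$ simply make precise what the paper leaves implicit.
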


\begin{proof}
Abbreviate
$a_n=a_{X_nX_{n+1}}$, and similarly for $b_n,d_n,\rho_n,\beta_n$.
Multiplication on the left gives
$$
 \begin{pmatrix}a_n&b_n\\0&d_n\end{pmatrix}
 \begin{pmatrix}A_n&B_n\\0&D_n\end{pmatrix}
 =\begin{pmatrix}
   a_nA_n&a_nB_n+b_nD_n\\0&d_nD_n
  \end{pmatrix}.
$$
Thus
$$
 B_{n+1}=a_nB_n+b_nD_n,
 \qquad D_{n+1}=d_nD_n.
$$
Since every triangular edge matrix is invertible, $d_n\ne0$.
Dividing by $D_{n+1}=d_nD_n$ gives
$$
 Z_{n+1}=\frac{B_{n+1}}{D_{n+1}}
 =\frac{a_n}{d_n}Z_n+\frac{b_n}{d_n}
 =\rho_nZ_n+\beta_n.
$$
Starting from $Z_0=0$, the first iterates are
$$
 Z_1=\beta_0,
 \quad
 Z_2=\rho_1\beta_0+\beta_1,
 \quad
 Z_3=\rho_2\rho_1\beta_0+\rho_2\beta_1+\beta_2.
$$
The general formula \eqref{eq:finite-perpetuity} follows by induction;
the empty product corresponding to $k=n-1$ equals $1$.
\end{proof}

\begin{lemma}
\label{lem:critical-perpetuity}
For the recursion $Z_n$ in \eqref{eq:affine-recursion}, there are $C,c>0$ such
that
\begin{equation}\label{eq:perpetuity-tail-article}
 \Prob\left(
  \log(1+|Z_n|)>C+\log(n+1)+t
 \right)
 \le C\exp\left(-c\frac{t^2}{n}\right),
\end{equation}
and
\begin{equation}\label{eq:perpetuity-mean-article}
 \EE\log(1+|Z_n|)\le C\sqrt n+C\log(n+1).
\end{equation}
\end{lemma}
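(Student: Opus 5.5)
Bound each term of the finite perpetuity \eqref{eq:finite-perpetuity} by the exponential of a difference of partial sums of the centered additive walk, then reduce everything to the maximal inequality of Lemma~\ref{lem:diffusion}.

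Put $\xi(i,j)=\log|\rho_{ij}|$. By \eqref{eq:critical-drift-article} it is a centered edge observable. Write $S_m=\sum_{\ell=0}^{m-1}\xi(X_\ell,X_{\ell+1})$. Then for $0\le k\le n-1$
$$
 \prod_{\ell=k+1}^{n-1}|\rho_\ell|=\exp\bigl(S_n-S_{k+1}\bigr)
 \le\exp\Bigl(2\max_{m\le n}|S_m|\Bigr).
$$
Since there are finitely many edges, $\beta^*:=\max_{ij}|\beta_{ij}|<\infty$. Plugging into \eqref{eq:finite-perpetuity} gives the pathwise bound
$$
 |Z_n|\le n\beta^*\exp\Bigl(2\max_{m\le n}|S_m|\Bigr),
$$
and hence
$$
 \log(1+|Z_n|)\le \log(1+\beta^*)+\log(n+1)+2\max_{m\le n}|S_m|.
$$
Here we used $\log(1+xy)\le\log(1+x)+\log(1+y)$ and $\log(1+e^s)\le s+\log 2$ for $s\ge0$, absorbing the constants.

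The tail bound \eqref{eq:perpetuity-tail-article} then follows directly from \eqref{eq:diffusion-tail}. The event $\{\log(1+|Z_n|)>C'+\log(n+1)+t\}$ is contained in $\{\max_{m\le n}|S_m|>C+t/2\}$ once $C'$ is chosen as $\log(1+\beta^*)+2C+1$. Its probability is at most $2\exp(-c\,t^2/(4n))$, which gives the claim after renaming constants. The mean bound \eqref{eq:perpetuity-mean-article} follows by taking expectations in the pathwise inequality and using \eqref{eq:diffusion-mean}, namely $\EE\max_{m\le n}|S_m|\le C\sqrt n$. Alternatively one can integrate the tail bound.

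The main point to check is that Lemma~\ref{lem:diffusion} applies to this observable under whatever initial law is used. The martingale–coboundary decomposition and the Hoeffding argument only need the centering \eqref{eq:critical-drift-article} and boundedness of $\xi$ and $u$; they hold for any initial state, since the increment bound $L$ and $\|u\|_\infty$ do not depend on it. So the estimates hold under $\EE_P$ and under every $\EE_{i}$ uniformly.

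Because the crude triangle-inequality bound already only costs $\log n$, there is no need for finer control of cancellations in the perpetuity. What remains is careful bookkeeping: the indexing of $\rho_\ell$ against $S$, and the empty product for $k=n-1$, which is handled by the inequality with $\exp(0)=1$.
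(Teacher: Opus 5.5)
Your proposal is correct and is essentially the paper's own proof. The paper bounds each term of \eqref{eq:finite-perpetuity} by $B_*\exp(S_n-S_{k+1})$ and uses $S_n-\min_{m\le n}S_m\le 2\max_{m\le n}|S_m|$ to get the same pathwise estimate $\log(1+|Z_n|)\le C_\beta+\log(n+1)+2\max_{m\le n}|S_m|$. It then applies the martingale--coboundary maximal inequality to the centered observable $\log|\rho_{ij}|$. The only cosmetic difference is that the paper gets the mean bound by integrating the tail, whereas you take expectations directly via \eqref{eq:diffusion-mean}; both work.
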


\begin{proof}

Let
$$
 S_0=0,
 \qquad
 S_m=\sum_{\ell=0}^{m-1}\log|\rho_\ell|,
 \qquad
 B_*=\max_{P_{ij}>0}|\beta_{ij}|.
$$
 For $0\le k\le n-1$, each summand of
\eqref{eq:finite-perpetuity} satisfies
$$
 \left|\prod_{\ell=k+1}^{n-1}\rho_\ell\right|
 =\exp(S_n-S_{k+1}).
$$
Hence
\begin{align}
 |Z_n|
 &\le B_*\sum_{k=0}^{n-1}\exp(S_n-S_{k+1})\notag\\
 &\le B_*n\exp\left(
   S_n-\min_{0\le m\le n}S_m\right).
 \label{eq:perpetuity-pathwise-expanded}
\end{align}
Let
$\mathcal R_n=S_n-\min_{m\le n}S_m$.  Since
$\mathcal R_n\ge0$,
$$
 1+|Z_n|\le(1+B_*n)e^{\mathcal R_n}.
$$
The elementary estimate
$$
 1+B_*n\le(1+B_*)(n+1)
$$
gives
$$
 \log(1+B_*n)\le\log(1+B_*)+\log(n+1).
$$
Moreover,
\begin{align*}
 \mathcal R_n
 &=S_n-\min_{0\le m\le n}S_m\\
 &\le |S_n|+\left|\min_{0\le m\le n}S_m\right|\\
 &\le |S_n|+\max_{0\le m\le n}|S_m|\\
 &\le2\max_{0\le m\le n}|S_m|.
\end{align*}
Thus, for \(C_\beta=\log(1+B_*)\),
\begin{equation}\label{eq:pathwise-perpetuity-article}
 \log(1+|Z_n|)
 \le C_\beta+\log(n+1)+2\max_{m\le n}|S_m|.
\end{equation}
%

Now apply the Poisson equation specifically to the centered edge
observable $\xi(i,j)=\log|\rho_{ij}|$.  Let $u_\rho$ be its
centered Poisson solution.  Lemma~\ref{lem:martingale-coboundary} gives
$$
 S_m=M_m^\rho+u_\rho(X_0)-u_\rho(X_m),
$$
where $M_m^\rho$ has increments bounded by some $L_\rho$.  The
explicit maximal estimate \eqref{eq:martingale-maximal-expanded} gives
\begin{equation}\label{eq:rho-maximal-expanded}
 \Prob\left(
  \max_{m\le n}|S_m|>q+2\|u_\rho\|_\infty
 \right)
 \le2\exp\left(-\frac{q^2}{2nL_\rho^2}\right).
\end{equation}
Set
$$
 C_0=C_\beta+4\|u_\rho\|_\infty.
$$
If
$$
 \log(1+|Z_n|)>C_0+\log(n+1)+t,
$$
then \eqref{eq:pathwise-perpetuity-article} implies
$$
 2\max_{m\le n}|S_m|>4\|u_\rho\|_\infty+t,
$$
or equivalently
$$
 \max_{m\le n}|S_m|
 >2\|u_\rho\|_\infty+\frac t2.
$$
Using \eqref{eq:rho-maximal-expanded} with $q=t/2$, we obtain, for
every $t\ge0$,
\begin{equation}\label{eq:perpetuity-tail-explicit}
 \Prob\left(
  \log(1+|Z_n|)>C_0+\log(n+1)+t
 \right)
 \le2\exp\left(-\frac{t^2}{8nL_\rho^2}\right).
\end{equation}
This is \eqref{eq:perpetuity-tail-article}, after renaming constants.

Finally, apply the tail identity to the positive part of
$$
 W_n=\bigl(
 \log(1+|Z_n|)-C_0-\log(n+1)
 \bigr)_+.
$$
Since $W_n\ge0$,
$$
 \EE W_n=\int_0^\infty\Prob(W_n>t)\dd t.
$$
Using \eqref{eq:perpetuity-tail-explicit} gives
\begin{align*}
 \EE\log(1+|Z_n|)
 &\le C_0+\log(n+1)+\EE W_n\\
 &\le C_0+\log(n+1)
 +2\int_0^\infty
   \exp\left(-\frac{t^2}{8nL_\rho^2}\right)\dd t\\
 &\le C+C\log(n+1)+C\sqrt n.
\end{align*}
This proves \eqref{eq:perpetuity-mean-article}.  Notice that we control
the logarithm of the perpetuity; $|Z_n|$ itself may be as large as
$\exp(O(\sqrt n))$.
\end{proof}

\begin{lemma}[triangular matrix estimate]
\label{lem:triangular-matrix}
In case \textup{(iv)} of Proposition~\ref{prop:normal-forms},
\begin{equation}\label{eq:triangular-growth-article}
 \EE\log\|\widehat A^{(n)}\|
 \le C\sqrt n+C\log(n+1).
\end{equation}
\end{lemma}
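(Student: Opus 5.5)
We bound $\|G_n\|$ entrywise, using the triangular form, and then transfer the bound to $\widehat A^{(n)}$ through the endpoint estimate \eqref{eq:endpoint-norm-bound-detailed}. Write
$$
 G_n=\begin{pmatrix}A_n&B_n\\0&D_n\end{pmatrix}.
$$
All norms on $\mathrm M_2(\RR)$ are equivalent, so there is $K\ge1$ with
$$
 \|G_n\|\le K\max\{|A_n|,|B_n|,|D_n|\}\le K\max\{|A_n|,|D_n|\}\,(1+|Z_n|),
$$
where $Z_n=B_n/D_n$ is the affine coordinate of Lemma~\ref{lem:affine-coordinate}. Taking logarithms gives
$$
 \log\|G_n\|\le\log K+\max\{\log|A_n|,\log|D_n|\}+\log(1+|Z_n|).
$$

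For the diagonal part, we proceed exactly as in case (ii) of Lemma~\ref{lem:compact-diagonal}. Put
$$
 R_n=\log|\det G_n|=\log|A_n|+\log|D_n|,
 \qquad
 S_n=\log|A_n|-\log|D_n|=\sum_{k<n}\log|\rho_{X_kX_{k+1}}|.
$$
Then $\max\{\log|A_n|,\log|D_n|\}=\tfrac12R_n+\tfrac12|S_n|$. By \eqref{eq:endpoint-determinant-bound-detailed} we have $|R_n|\le2\log L_C$. The edge observable $\log|\rho_{ij}|$ is centered by \eqref{eq:critical-drift-article}, so Lemma~\ref{lem:diffusion} gives $\EE_P|S_n|\le C\sqrt n$.

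For the off-diagonal part, Lemma~\ref{lem:critical-perpetuity} gives
$$
 \EE\log(1+|Z_n|)\le C\sqrt n+C\log(n+1).
$$

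Summing the three contributions yields
$$
 \EE_P\log\|G_n\|\le C+C\sqrt n+C\log(n+1).
$$
Then \eqref{eq:endpoint-norm-bound-detailed} adds only the constant $2\log L_C$ and gives \eqref{eq:triangular-growth-article}. Since $n\ge1$, the constant is absorbed into $C\sqrt n$.

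The one point to check is the initial law. Lemmas~\ref{lem:diffusion} and~\ref{lem:critical-perpetuity} are proved with bounds that are uniform in the initial state: the increments of the martingale are bounded by $L$ pathwise, and the coboundary is bounded by $\|u\|_\infty$. So they hold under each $\mu_{P,i}$, and by \eqref{eq:stationary-expectation-decomposition} also under the stationary measure $\mu_P$.

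The main obstacle, controlling the perpetuity $Z_n$, has already been handled in Lemma~\ref{lem:critical-perpetuity}. What remains is the elementary inequality $|B_n|\le|D_n|\,|Z_n|$, which holds because $D_n\neq0$.
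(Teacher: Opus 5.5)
Your proposal is correct and follows essentially the paper's proof. Both arguments establish the pathwise bound $\log\|G_n\|\le C+\tfrac12|S_n|+\log(1+|Z_n|)$, take expectations using Lemma~\ref{lem:diffusion} and Lemma~\ref{lem:critical-perpetuity}, and transfer the result to $\widehat A^{(n)}$ via \eqref{eq:endpoint-norm-bound-detailed}. The only difference is cosmetic: you get the diagonal contribution from an entrywise bound and the case~(ii) identity $\max\{\log|A_n|,\log|D_n|\}=\tfrac12R_n+\tfrac12|S_n|$, whereas the paper factors out $D_n$ and uses $-\tfrac12S_n+\log(1+e^{S_n})=\log(2\cosh(S_n/2))\le\log2+\tfrac12|S_n|$.
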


\begin{proof}
Since
$$
 \frac{A_n}{D_n}=\prod_{k=0}^{n-1}\rho_k,
$$
we have
\begin{equation}\label{eq:triangular-Sn-definition}
 S_n:=\log|A_n/D_n|
 =\sum_{k=0}^{n-1}\log|\rho_k|.
\end{equation}
The determinant of the triangular product is $A_nD_n$, and
$$
 \log|A_nD_n|
 =2\log|D_n|+\log|A_n/D_n|
 =2\log|D_n|+S_n.
$$
The determinant endpoint estimate
\eqref{eq:endpoint-determinant-bound-detailed} therefore gives
\begin{equation}\label{eq:triangular-determinant-expanded}
 |2\log|D_n|+S_n|
 =|\log|A_nD_n||\le2\log L_C.
\end{equation}
In particular,
\begin{equation}\label{eq:Dn-upper-from-determinant}
 \log|D_n|\le-\frac12S_n+\log L_C.
\end{equation}

%
Factoring out $D_n$,
$$
 G_n=D_n
 \begin{pmatrix}A_n/D_n&Z_n\\0&1\end{pmatrix}.
$$
For any $x,z\in\RR$, the operator norm is bounded by the Frobenius
norm, and hence
$$
 \left\|\begin{pmatrix}x&z\\0&1\end{pmatrix}\right\|
 \le\sqrt{x^2+z^2+1}
 \le1+|x|+|z|.
$$
Because $|A_n/D_n|=e^{S_n}$, we obtain
\begin{align}
 \log\|G_n\|
 &\le\log|D_n|+log(1+e^{S_n}+|Z_n|)\notag\\
 &\le-\frac12S_n+\log L_C
       +\log(1+e^{S_n}+|Z_n|),
 \label{eq:triangular-pre-split}
\end{align}
where the second line uses \eqref{eq:Dn-upper-from-determinant}.
The elementary product inequality
$$
 1+e^{S_n}+|Z_n|
 \le(1+e^{S_n})(1+|Z_n|)
$$
gives
\begin{align*}
 \log\|G_n\|
 &\le \log L_C
 +\left[-\frac12S_n+\log(1+e^{S_n})\right]
 +\log(1+|Z_n|).
\end{align*}
The expression in brackets is symmetric in the sign of $S_n$:
$$
 -\frac12S_n+\log(1+e^{S_n})
 =\log(e^{-S_n/2}+e^{S_n/2})
 =\log(2\cosh(S_n/2))
 \le\log2+\frac12|S_n|.
$$
Therefore
\begin{equation}\label{eq:triangular-norm-expanded}
 \log\|G_n\|
 \le \log(2L_C)+\frac12|S_n|+\log(1+|Z_n|).
\end{equation}

The observable $\log|\rho_{ij}|$ is centered by
\eqref{eq:critical-drift-article}.  Lemma~\ref{lem:diffusion} applied
to \eqref{eq:triangular-Sn-definition} gives
\begin{equation}\label{eq:triangular-Sn-mean}
 \EE_P|S_n|\le C\sqrt n.
\end{equation}
Lemma~\ref{lem:critical-perpetuity} gives
\begin{equation}\label{eq:triangular-Zn-log-mean}
 \EE_P\log(1+|Z_n|)
 \le C\sqrt n+C\log(n+1).
\end{equation}
Taking expectations in \eqref{eq:triangular-norm-expanded} and using
\eqref{eq:triangular-Sn-mean}--\eqref{eq:triangular-Zn-log-mean}, we obtain
$$
 \EE_P\log\|G_n\|
 \le C\sqrt n+C\log(n+1).
$$
Finally, \eqref{eq:endpoint-norm-bound-detailed} gives
\begin{align*}
 \EE_P\log\|\widehat A^{(n)}(X)\|
 &\le\EE_P\log\|G_n\|+2\log L_C\\
 &\le C\sqrt n+C\log(n+1),
\end{align*}
which proves \eqref{eq:triangular-growth-article}.
\end{proof}

\section{Proof of the Main Theorem}

\begin{proof}
Proposition~\ref{prop:normal-forms} exhausts the possibilities.
The scalar normalization, its stationary average, and the automatic lower bound at every scale were established in
\eqref{eq:zero-gap-target-identity}--\eqref{eq:zero-gap-lower-all-scales}. The exact identity is
\begin{equation}\label{eq:Fn-zero-gap-final-identity}
 F_n(A,P)-\lambda_0
 =\frac1n\EE_P\log\|\widehat A^{(n)}(X)\|,
\end{equation}
and its left-hand side is nonnegative for every $n$ by
subadditivity.

In the compact (conformal) case, \eqref{eq:compact-growth} and
\eqref{eq:Fn-zero-gap-final-identity} give
$$
 0\le F_n(A,P)-\lambda_0\le\frac Cn.
$$
In the degenerate diagonal and simply reducible cases, \eqref{eq:diagonal-growth} gives
$$
 0\le F_n(A,P)-\lambda_0\le Cn^{-1/2}.
$$
In the degenerate triangular case, Lemma~\ref{lem:triangular-matrix} gives
$$
 0\le F_n(A,P)-\lambda_0
 \le Cn^{-1/2}+C\frac{\log(n+1)}n.
$$
Since
$$
 \log(n+1)\le C\sqrt n
 \qquad(n\ge1),
$$
the degenerate triangular bound also becomes
$$
 0\le F_n(A,P)-\lambda_0\le Cn^{-1/2}.
$$
Thus Proposition~\ref{prop:finite-time-principle} applies with
$\sigma=1/2$ in all noncompact zero-gap cases and with $\sigma=1$
in the compact case.  It yields
$$
 |\lambda_\pm(B,Q)-\lambda_0|
 \le C\left(\log\frac1\delta\right)^{-1/2},
$$
respectively the sharper exponent $1$ in the compact case, where
$\delta=\Delta((A,P),(B,Q))$.  This completes the proof.
\end{proof}

\bibliographystyle{amsalpha}
\bibliography{ref}
\end{document}